\newcommand{\ind}{\mathds}
\newcommand{\floor}[1]{{\lfloor #1 \rfloor}}
\newcommand{\Z}{\ensuremath{\mathbb{Z}}}
\newcommand{\N}{\ensuremath{\mathbb{N}}}
\newcommand{\E}{\ensuremath{\mathbb{E}}}
\renewcommand{\P}{\ensuremath{\mathbb{P}}}
\newtheorem{theorem}{Theorem}[section]
\newtheorem{lemma}[theorem]{Lemma}
\newtheorem{corollary}[theorem]{Corollary}
\newtheorem{proposition}[theorem]{Proposition}
\newtheorem{remark}[theorem]{Remark}
\newtheorem{claim}[theorem]{Claim} 
\newtheorem{definition}[theorem]{Definition}
\DeclareMathOperator{\e}{e}
\DeclareMathOperator{\C}{cap}
\DeclareMathOperator{\Cov}{Cov}
\numberwithin{equation}{section}
\definecolor{Red}{rgb}{1,0,0}
\definecolor{Blue}{rgb}{0,0,1}
\definecolor{Olive}{rgb}{0.41,0.55,0.13}
\definecolor{Yarok}{rgb}{0,0.5,0}
\definecolor{Green}{rgb}{0,1,0}
\definecolor{MGreen}{rgb}{0,0.8,0}
\definecolor{DGreen}{rgb}{0,0.55,0}
\definecolor{Yellow}{rgb}{1,1,0}
\definecolor{Cyan}{rgb}{0,1,1}
\definecolor{Magenta}{rgb}{1,0,1}
\definecolor{Orange}{rgb}{1,.5,0}
\definecolor{Violet}{rgb}{.5,0,.5}
\definecolor{Purple}{rgb}{.75,0,.25}
\definecolor{Brown}{rgb}{.75,.5,.25}
\definecolor{Grey}{rgb}{.7,.7,.7}
\definecolor{Black}{rgb}{0,0,0}
\newcommand{\T}{\mathcal{T}}
\newcommand{\ignore}[1]{{}}
\renewcommand{\P}{\ensuremath{\mathbb{P}}}
\newcommand{\one}{{\mathchoice {1\mskip-4mu\mathrm l}
         {1\mskip-4mu\mathrm l}
         {1\mskip-4.5mu\mathrm l}
         {1\mskip-5mu\mathrm l}}}
\begin{document}

\title{Transience of the vacant set for near-critical random interlacements in high dimensions}


\author{\renewcommand{\thefootnote}{\arabic{footnote}}
Alexander\ Drewitz
\footnotemark[1]
\qquad
\renewcommand{\thefootnote}{\arabic{footnote}}
Dirk\ Erhard
\footnotemark[{2}]$\,\,^{,}$ \hspace{-.6em} \footnotemark[{3}]
}

\footnotetext[1]{
Department of Mathematics, Columbia University, RM 614, MC 4419,
2990 Broadway,
New York, NY 10027, USA,\\
{\sl drewitz@math.columbia.edu}
}
\footnotetext[2]{
Mathematical Institute, Leiden University, P.O.\ Box 9512,
2300 RA Leiden, The Netherlands,\\
{\sl erhardd@math.leidenuniv.nl}
}
\footnotetext[3]{
DE was supported by ERC Advanced Grant 267356 VARIS. 
}

\date{\today}
\maketitle


\begin{abstract}

The model of random interlacements 
is a one-parameter family $\mathcal I^u,$ $u \ge 0,$ of random subsets of $\Z^d,$ which locally describes the trace of simple random walk on a $d$-dimensional torus run
up to time $u$ times its volume.
Its complement, the so-called vacant set $\mathcal V^u$, has been shown  to undergo a non-trivial percolation phase-transition in $u;$
i.e., there exists 
$u_*(d) \in (0, \infty)$ such that for $u \in [0, u_*(d))$ the vacant set $\mathcal V^u$ contains a unique infinite connected component
$\mathcal V_\infty^u,$ while
for $u > u_*(d)$ it consists of finite connected components.
It is known \cite{SZ11,SZ11B} that $u_*(d) \sim \log d,$
 and in this article we show 
the existence of $u(d) > 0$ with
$\frac{u(d)}{u_*(d)} \to 1$ as $d \to \infty$ such that
$\mathcal V_\infty^{u}$ is transient for all $u \in [0, u(d)).$

\medskip\noindent
{\it MSC} 2010. Primary 60K35, 60G55, 82B43.\\
{\it Key words and phrases.} Random interlacements, percolation, transience, electrical networks.

\end{abstract}


\section{Introduction and the main result}
\label{S1}
\subsection{Introduction}
\label{S1.1}

The model of random interlacements has been introduced by Sznitman \cite{SZ10} as 
a family of random subsets of $\Z^d$ denoted by $\mathcal I^u,$ $u \ge 0,$ where $u$
plays the role of an intensity parameter.
It locally describes the trace of simple random walk on
the discrete torus $(\Z/ N\Z)^d$ run up to time $uN^d$ (see Windisch \cite{Wi-08} as well as Teixeira and Windisch \cite{TeWi-11}).
Using the inclusion-exclusion formula the distribution of the set $\mathcal I^u$ 
can be neatly characterized via the equalities
\begin{equation*}
\P[ K \cap \mathcal I^u = \emptyset] = e^{- u \, \mathrm{cap}(K)}, \quad \forall K \subset \subset \Z^d.
\end{equation*}
Here, $\mathrm{cap}(K)$ is used to denote the capacity of the set $K$ (see \eqref{eq:cap} for the definition of capacity).
In a more constructive fashion, random interlacements at level $u$ 
can also be obtained by considering the trace of the elements in the support of
a Poisson point process with intensity parameter $u\geq 0$, 
which itself
takes values in the space of locally finite measures on
 doubly infinite simple random walk trajectories modulo time shift (see Section \ref{S2.2} for further details).

This constructive definition already suggests that
 the model exhibits long range dependence, and indeed the asymptotics
\begin{equation} \label{eq:Corr}
\Cov(\ind{1}_{x \in \mathcal I^u},\ind{1}_{y \in \mathcal I^u}) \sim c(u) \vert x- y \vert_2^{-(d-2)},
\end{equation}
(and similarly for $\mathcal I^u$ replaced by $\mathcal V^u$) 
holds for $\vert x - y \vert_2 \to \infty,$ as can be deduced from (0.11) in \cite{SZ10}. 
As a consequence,
standard techniques from Bernoulli percolation
do not apply anymore.
For example, due to \eqref{eq:Corr}  Peierl's argument
and the van den Berg-Kesten inequality break down.
The long range dependence also entails that random interlacements neither stochastically dominates nor can
be dominated by Bernoulli percolation (cf. Remark 1.6 1) of \cite{SZ10}). Moreover from the constructive definition
of random interlacements alluded to above, one can infer that the model does not fulfill the finite energy property
(see Remark 2.2 3) of \cite{SZ10}).
 These features make the model both, more appealing and more complicated to investigate. 

During the past couple of years there has been intensive research on random interlacements. Basic properties such as e.g. the shift-invariance, ergodicity and
connectedness of $\mathcal I^u$ have been established in the seminal paper  \cite{SZ10}. 
Since then, one has obtained a deeper understanding of the geometry of random interlacements.
In fact, R\'ath and Sapozhnikov 
\cite{RaSa-11} have shown the transience for random interlacements $\mathcal I^u$ 
itself throughout the whole range 
of parameters $u \in (0,\infty).$ The same authors in
 \cite{RaSa-10}, as well as  Procaccia and Tykesson \cite{PrTy-11} have
shown by essentially different methods (using ideas from the field of potential theory on the one hand,
 and stochastic dimension 
on the other hand) that any two points of the set $\mathcal I^u$ can be connected
by using at most $\lceil d/2 \rceil$ trajectories from the constructive definition described above. Recently, using in parts extensions of the techniques in \cite{RaSa-10},
this result has been generalized to an arbitrary number of points by Lacoin and Tykesson \cite{LaTy-12}.
Another step in showing that the geometry of random interlacements resembles that of $\Z^d$ has been undertaken by 
{\v{C}}ern{\'y} and Popov \cite{CePo-12}, where
the authors prove that the chemical distance (also called graph distance or internal distance)
in the set $\mathcal I^u$ is comparable to that of $\Z^d.$ Using this result they proceed to prove a shape theorem
for balls
in $\mathcal I^u$ with respect to the metric induced by the chemical distance.

It is particularly interesting 
to obtain a deeper understanding of the vacant set $\mathcal V^u$ and its geometry also.
Indeed, on the one hand, this is more challenging than the investigation of $\mathcal I^u$ in the sense that
one cannot directly take advantage of the many tools
available for simple random walk, which have proven to be very helpful in understanding the set $\mathcal I^u.$
On the other hand, it has been shown by Sznitman  \cite{SZ10} as well as Sidoravicius and Sznitman \cite{SSZ09}
that there exists a non-trivial percolation phase-transition for $\mathcal V^u$ at some $u_*(d) \in (0,\infty)$ in the following sense:
For $u > u_*(d)$ the vacant set $\mathcal V^u$ as a subgraph of $\Z^d$
 contains only finite connected components (subcritical phase), whereas for $ u \in [0, u_*(d))$ it has an
 infinite connected component almost surely
(supercritical phase). 
Using a strategy inspired by that of the seminal paper of Burton and Keane \cite{BuKe-89}, and taking care of the difficulties arising from the lack of the finite energy property for random interlacements,
Teixeira \cite{Te-09} 
has shown the uniqueness of the 
infinite connected component of $\mathcal V^u$  (denoted by  $\mathcal V_\infty^u$) 
in the supercritical phase. 

While for random interlacements itself many results have been shown to be valid
for any $u > 0,$ the situation is more complicated 
when investigating the 
vacant set $\mathcal V^u.$ In fact, while there are few results concerning the vacant set in the first place so far,
the ones which describe geometric properties such as Teixeira \cite{Te-11}, Drewitz, R\'ath and Sapozhnikov \cite{DrRaSa-12}, 
Popov and Teixeira \cite{PoTe-12} (dealing with the size distribution of finite clusters of the vacant
set and local uniqueness properties of $\mathcal V_\infty^u$) and Drewitz, R\'ath and Sapozhnikov
\cite{DrRaSa-12b} as well as Procaccia, Rosenthal and Sapozhnikov \cite{PrRoSa-13} (providing 
chemical distance results as well as heat kernel estimates in a more general context) are valid for some non-degenerate
fraction of the supercritical phase only.
To the best of  our knowledge, our main result
Theorem \ref{thm:mainres} is the first one concerning geometric properties of the vacant set which is valid throughout most, and asymptotically all, of the supercritical
phase for $\mathcal V^u.$

\bigskip

\subsection{Main result}
\label{S1.2}
Here we formulate our main result.
For this purpose recall that a connected graph with finite degree $G=(V,E)$ with vertex set 
$V$ and edge set $E$ is called transient if simple random walk 
on $G$ is transient. For the rest of this article $V$ will usually denote a subset of $\Z^d$ and $E$
will be the set of nearest neighbor edges in $\Z^d$ which have both ends contained in $V.$
\begin{theorem}
\label{thm:mainres}
Let $\varepsilon \in (0,1)$. There is $d_0=d_0(\varepsilon)\in\N$, 
such that for all $d\geq d_0$ and all $u\leq (1-\varepsilon)u_{*}(d)$,
the unique infinite connected component $\mathcal{V}_{\infty}^{u}$
of the vacant set $\mathcal{V}^{u}$ of random interlacements in $\Z^d$ is transient $\P$-a.s.
\end{theorem}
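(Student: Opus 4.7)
The plan is to reduce transience of $\mathcal{V}^u_\infty$ to the existence of a unit flow of finite energy, via the Doyle--Snell/Thomson variational principle, and to construct such a flow by a multi-scale renormalization. By shift-invariance and ergodicity of the interlacement law, it suffices to produce, with positive probability, a unit flow on $\mathcal{V}^u$ from the origin to infinity with finite expected energy, on the event that $0 \in \mathcal{V}^u_\infty$.

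I would first set up the renormalization. Tile $\Z^d$ by disjoint boxes of side $L$ (with $L$ chosen after $d$), and call a box $B$ \emph{good} if $\mathcal{V}^u$ restricted to a slight enlargement $B'$ contains a unique ``giant'' component that meets every face of $B$, so that giant components in neighboring good boxes are automatically glued together. Using the decoupling inequalities for random interlacements together with the density and connectivity estimates underlying the bound $u_*(d)\sim\log d$ from \cite{SZ11,SZ11B}, one aims to show that for $u \le (1-\varepsilon)u_*(d)$ the probability of being bad tends to $0$ as $L\to\infty$, uniformly in $d\ge d_0(\varepsilon,L)$, and that the good-box indicator field stochastically dominates a Bernoulli site process on $\Z^d$ of intensity arbitrarily close to $1$.

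Next, I would construct the flow in two stages. On the macroscopic lattice, high-density Bernoulli site percolation in $d\ge 3$ admits a transient infinite cluster (Grimmett--Kesten--Zhang), equipped with a canonical unit flow of finite energy; alternatively one may average the random-walk flow on $\Z^d$ over many nearly disjoint macroscopic paths in the spirit of Pemantle--Peres. This macroscopic flow I would lift to a microscopic unit flow on $\mathcal{V}^u$ by routing through the unique giant crossing clusters inside each good box; because each such crossing has bounded intrinsic diameter in units of $L$, the lift costs only a bounded multiplicative factor in energy per box. Bad macroscopic vertices, being rare and having short-range dependence (controlled via the decoupling inequalities), can be circumvented by local rerouting, as in the standard treatment of supercritical Bernoulli percolation clusters.

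The main obstacle is quantitative: one needs local-uniqueness and crossing estimates for $\mathcal{V}^u$ that remain effective up to $u=(1-\varepsilon)u_*(d)$ as $d\to\infty$. The existing results of Drewitz--R\'ath--Sapozhnikov and Popov--Teixeira are valid only in a non-degenerate fraction of the supercritical phase and are therefore insufficient; pushing the renormalization all the way to $(1-\varepsilon)u_*(d)$ requires new dimension-sensitive inputs, most likely refined sprinkling arguments combined with the high-dimensional features (tree-like geometry of the supercritical cluster, fast decay of finite-cluster probabilities) that are responsible for the sharp $\log d$ asymptotics. Once such a robust good-box framework is established, the flow construction and the finite-energy bound should be essentially mechanical.
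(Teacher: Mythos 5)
Your overall architecture (good/bad renormalization, decoupling, then a finite-energy flow/path-measure argument lifted from a macroscopic transient cluster to $\mathcal V^u$) is in the right spirit, but the proposal defers exactly the point where the theorem's difficulty lies, and you acknowledge this yourself: you need a one-box (seed) estimate — local uniqueness and crossings for $\mathcal V^u$ in boxes of side $L$ — that remains effective up to $u=(1-\varepsilon)u_*(d)$, and you correctly note that no such estimate is available; you then say it ``requires new dimension-sensitive inputs'' without supplying them. That is a genuine gap, not a mechanical step: for fixed $d$ there is no known way to make the probability of your good-box event tend to $1$ as $L\to\infty$ near $u_*(d)$ (this is precisely the open local-uniqueness problem discussed in the paper's introduction). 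The paper closes this gap by a concretely different choice of renormalization: the macroscopic lattice is a three-dimensional slice $\Z^3\times\{0\}^{d-3}$, the blocks are the hypercubes $2y+\{0,1\}^d$ (side $2$, not side $L\to\infty$), and the large parameter is the dimension $d$ itself. The seed estimate is Sznitman's Theorem 4.2 of \cite{SZ11}, which gives, at intensity $(1-\varepsilon)\log d$, a component of the vacant set in each hypercube occupying a $(1-d^{-2})$ fraction and connected to its analogues in neighboring hypercubes within two-dimensional plane slices; this yields $\P[\text{bad}]\le d^{-7}/5$, i.e.\ goodness with probability tending to $1$ as $d\to\infty$ at the near-critical level. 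The second essential ingredient you do not address is that the sprinkling in the decoupling inequality must cost only a factor tending to $1$ in the intensity as $d\to\infty$; the paper reproves Sznitman's decoupling (Proposition \ref{prop:decoupl}) with all constants' dimension dependence made explicit precisely so that $u_\infty^-\ge(1-2\varepsilon)u_*(d)$.

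Two further points where your outline would run into trouble even granting a seed estimate. First, stochastic domination of the good-box field by high-density Bernoulli site percolation is not straightforward here: the correlations of random interlacements decay only polynomially, so Liggett--Schonmann--Stacey-type domination does not apply directly; the paper avoids domination altogether, instead showing via decoupling that $*$-clusters of bad vertices have stretched-exponentially decaying size (Proposition \ref{prop:badComponentsSmall}), building an infinite good cluster by a deterministic geometric argument, and proving its transience by rerouting a finite-energy path measure on $\Z^3$ around bad clusters in the style of Angel--Benjamini--Berger--Peres, with an explicit second-moment bookkeeping (the sets $S(\cdot)$, $T(\cdot)$ in Section \ref{S5.2}) to control how many old vertices map near a given new one. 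Second, your remark that the lift to $\mathcal V^u$ ``costs only a bounded multiplicative factor per box'' is the analogue of the paper's estimate \eqref{eq:pitilde} and Claim \ref{cl:phitilde}; this part is indeed routine, but only because the blocks are hypercubes of volume $2^d$ with the uniqueness of the large component guaranteed by Lemma 2.1 of \cite{SZ11} — with your $L$-boxes it would again hinge on the unavailable local uniqueness.
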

Recall here that  $u_*(d) \sim \log d,$ see \cite{SZ11,SZ11B}, where $\log$ denotes the natural logarithm. We refer to Section \ref{S2} for a rigorous definition of the terms
appearing in Theorem \ref{thm:mainres}.

\subsection{Discussion}
\label{S1.3}
Theorem \ref{thm:mainres} provides a rough geometrical description
of the infinite connected component of the vacant set, which is valid throughout most of the supercritical 
phase when $d$ is large enough.
To establish this result we introduce a classification of vertices in $\Z^3 \times \{0\}^{d-3}$ into ``good''
ones and ``bad''
ones, where ``good'' refers to having good local connectivity properties.
This way the problem will be reduced
to showing the transience of an infinite connected component of good vertices in $\Z^3.$
Our construction of this infinite cluster will
employ
results of Sznitman \cite{SZ11,SZ12}, whereas the proof of
the actual transience of this component uses ideas of Angel, Benjamini, Berger and Peres \cite{AnBeBePe-06}.
Besides making the attempt to extend our result to the entire supercritical phase 
it would be interesting to obtain a more precise
understanding of $\mathcal{V}_{\infty}^{u}$.
Results in this direction have been obtained in \cite{DrRaSa-12b,PrRoSa-13}.
A key assumption in these papers was  a  local uniqueness property (in our context of
$\mathcal{V}_{\infty}^{u}$), which roughly states that with high probability the second largest component
in a predetermined macroscopic box is small compared to the largest connected component in the same box.
However, this local uniqueness property has so far only been established for a non-degenerate 
part of the supercricital phase, and obtaining its validity throughout the whole supercritical phase
would be an interesting topic for further investigations.
\medskip

The rest of this article is organized as follows. In Section \ref{S2} we introduce
further notation, give a more detailed description of the model and provide a decoupling inequality
tailored to our needs (Proposition \ref{prop:decoupl}). 
The proof of Theorem \ref{thm:mainres} is carried out in Section \ref{S3}.  Sections \ref{S4} and \ref{S5}
contain the proofs of auxiliary results employed when proving Theorem \ref{thm:mainres}.


\section{Notation and introduction to the model}
\label{S2}
Section \ref{S2.1} introduces notation used in this article,
Section \ref{S2.2} defines random interlacements, 
while Section \ref{S2.3} states
a decoupling inequality.
Throughout the article we assume that $d\geq 3$.

\subsection{Basic notation}
\label{S2.1}
In the rest of this article we will tacitly identify $\Z^3$ with $\Z^3\times\{0\}^{d-3}$
via the bijection $(x_1, x_2, x_3) \mapsto (x_1, x_2, x_3, 0, \ldots, 0),$ if no confusion arises.
\\

For a subset $K\subseteq \Z^d$ we write $K\subset\subset \Z^d$, if its cardinality 
$|K|$ is finite or equivalently if $K$ is compact.
We denote by $|\cdot|_1$ the $\ell^1$-norm, by $|\cdot|_2$ the Euclidean norm, whereas $|\cdot|_\infty$ 
stands for the $\ell^\infty$-norm on $\Z^d$.
Sites $x,x'$ in $\Z^d$ are said to be nearest neighbors 
($*$-neighbors), 
if $|x-x'|_1=1$ 
($|x-x'|_\infty=1$).
A sequence $x_0,x_1,\ldots, x_n$ in $\Z^d$ is called a 
nearest neighbor path ($*$-path), if
$x_i$ and $x_{i+1}$ are nearest neighbors 
($*$-neighbors), for all $0\leq i\leq n-1$; in this case we say that the path has length
$n+1.$
A set $K\subseteq \Z^d$ is said to be connected ($*$-connected), if for any pair $x_1, x_2\in K$ 
 there exists
a nearest neighbor ($*$-neighbor) path $x_1, y_1, y_2, \ldots, y_n, x_2$ 
such that these vertices are contained in $K.$
For $K\subseteq \Z^d$ we introduce the following notions of boundaries
\begin{align}
\label{eq:boundary}
\begin{split}
\partial_{\mathrm{int}} K&= \{x\in K:\, \mbox{$x$ has a nearest neighbor in $K^c$}\},\\
\partial_{\mathrm{int}}^{*}K &= \{x\in K:\, \mbox{$x$ has a $*$-neighbor in $K^c$}\},\\
\partial K &= \{x\in K^c:\, x\mbox{ has a nearest neighbor in $K$}\},\\
\partial^{*}K &= \{x\in K^c:\, x\mbox{ has a $*$-neighbor in $K$}\},\\
\end{split}
\end{align}

to which we refer as interior boundary (interior $*$-boundary) and boundary ($*$-boundary),
respectively.
Moreover, the exterior  boundary (exterior $*$-boundary),
denoted by $\partial_{\mathrm{ext}} K$ ($\partial_{\mathrm{ext}}^{*} K$),
is the set of vertices in the boundary ($*$-boundary),
which are the starting point of an infinite non-intersecting nearest neighbor path
with no vertex inside $K$.\\
The closure of a set $K\subseteq \Z^d$ is defined by
$\overline{K} = K \cup \partial K$.
If $x\in\Z^d$ or $x\in\Z^3$ and $L\geq 0$, we write 
\begin{equation*}
\begin{aligned}
&B_i(x,L) = \{y\in\Z^d:\ |x-y|_i\leq L\} \quad \text{ and } \quad  
&B_{i}^{3}(x,L) = \{y\in\Z^3:\ |x-y|_i\leq L\},
\end{aligned}
\end{equation*}
respectively, for $i\in\{1,2,\infty\}.$
Given a set $K\subseteq \Z^d$ and $w:\N_0\to \Z^d$,
we denote by
\begin{equation}
\label{eq:hittime}
\begin{aligned}
H_K(w) = \inf\{n\geq 0\colon\, w(n)\in K\} \quad \text{ and } \quad
\widetilde{H}_K(w) = \inf\{n\geq 1\colon\, w(n)\in K\}
\end{aligned}
\end{equation}
the entrance time in and the hitting time of $K$, respectively.
For $x\in\Z^d$, let $P_x$ denote the law of simple random walk
on $\Z^d$ with starting point $x$.
If $K\subset\subset \Z^d$, we write $\e_K$ for the equilibrium 
measure of $K$, i.e.,
\begin{equation}
\label{eq:cap}
\e_K(x) = P_x\big[\widetilde{H}_K = \infty\big]\one_{\{x\in K\}} \quad \text{ and } \quad
\C(K) = \sum_{x\in K}e_K(x)
\end{equation}
for the total mass of  $e_K,$ which is usually referred to as the capacity of $K.$
From this one immediately obtains the subadditivity of the capacity; i.e., for all $K,K' \subset \subset \Z^d$
one has
\begin{equation} \label{eq:subadd}
\C(K\cup K') \le \C(K) + \C(K').
\end{equation}
We denote by $g:\Z^d\times\Z^d\rightarrow [0,\infty)$ the 
Green function of simple random walk on $\Z^d$, which is defined via
\begin{equation*}
g(x,x') = \sum_{n\in\N_0} P_x\big[X_n=x'\big], \quad \mbox{ for } x,x' \in\Z^d,
\end{equation*}
and we write $g(0)=g(0,0)$.
Finally, let us explain the convention we use concerning constants.
Throughout the article, small letters such as $c$, $c'$, $c_1,c_2,\cdots$,
denote constants which are independent of $d$.
Capital letters, such as $C$ and $C_1$ might depend on the dimension.
Constants that come with an index are fixed from their first appearance on (modulo changes of the dimension if they are capital letter
constants),
whereas constants without index may change from place to place.

\subsection{Definition of random interlacements}
\label{S2.2}
The model of random interlacements has been introduced in \cite{SZ10}, 
and we refer to this source for a discussion that goes beyond the description we are giving here. 
We write
\begin{equation*}
\label{eq:W+}
W_{+}= \Big\{w: \N_0\rightarrow \Z^d \colon\, 
|w(n)-w(n+1)|_1 = 1 \; \forall \,  n\in\N_0, \text{ and } \lim_{n\to\infty}|w(n)|_1=\infty\Big\}
\end{equation*}
for the set  of infinite nearest neighbor paths tending 
to infinity and
\begin{equation*}
W=\Big\{w:\Z\rightarrow\Z^d\colon\,
|w(n)-w(n+1)|_1=1 \; \forall \,  n\in\Z, \text{ and }  \lim_{n\to \pm \infty}|w(n)|_1
=\infty\Big\}
\end{equation*}
for the set of doubly infinite nearest neighbor paths
tending to infinity at positive and negative infinite times.
$W_+$ is endowed with the $\sigma$-algebra $\mathcal{W}_+$
generated by the canonical coordinate maps $X_n$, $n\in\N_0$.
Similarly, we will write $\mathcal{W}$ and
$X_{n}$, $n\in\Z$, 
for the canonical $\sigma$-algebra and the canonical coordinate process
on $W$.
We denote by $W^*$ the space of equivalence classes of 
trajectories in $W$ modulo time-shifts, i.e.,
\begin{equation*}
W^*= W/\thicksim, \mbox{ where } w\thicksim w' \mbox{ iff } w(\cdot) = w'(\cdot+k)
\mbox{ for some }k\in\Z.
\end{equation*}
We let $\pi^*:W\rightarrow W^*$ be the canonical projection
and endow $W^*$ with the $\sigma$-algebra induced by $\pi^*$ via
\begin{equation*}
\mathcal{W}^*= \big\{A\subset W^*\colon\, (\pi^*)^{-1}(A)\in \mathcal{W}\big\}.
\end{equation*}
We furthermore introduce for $K\subset\subset \Z^d$ the subsets
\begin{equation*}
\begin{aligned}
&W_K= \big\{w\in W\colon\, \mbox{there is $k\in\Z$
such that } w(k)\in K\big\},\\
&W_K^{*}= \pi^*(W_K)
\end{aligned}
\end{equation*}
of $W$ and $W^*,$ respectively.
Note that $W_K\in\mathcal{W}$ and $W_K^*\in \mathcal{W}^{*}$.
For $A,B\in \mathcal{W}_+$, $K\subset\subset\Z^d$ 
and $x\in\Z^d$ we define a finite measure $Q_K$ on $W$ via
\begin{equation*}
Q_K\big[(X_{-n})_{n\geq 0} \in A, X_0=x,
(X_n)_{n\geq 0} \in B\big]
= P_x[A\mid \widetilde{H}_K=\infty]\e_K(x)P_x[B].
\end{equation*}
According to Theorem 1.1 in \cite{SZ10} there exists a unique 
$\sigma$-finite measure $\nu$ on $(W^*,\mathcal{W}^*)$ 
such that for all $K\subset\subset \Z^d$ and 
$E\in \mathcal{W}^{*}$ with $E\subseteq W_K^*,$
 the equation
\begin{equation*}
\nu[E]= Q_K\big[(\pi^*)^{-1}(E)\big]
\end{equation*}
is fulfilled.
We will also need the space
\begin{equation*}
\begin{aligned}
\Omega =\Big\{\omega = \sum_{i\geq 0}& \delta_{(w_i^*,u_i)}
\mbox{ with } (w_i^*,u_i)\in W^*\times[0,\infty), \mbox{ for }
i\geq 0,\\
& \mbox{and } \omega[W_K^*\times[0,u]]<\infty 
\mbox{ for any }K\subset\subset \Z^d \mbox{ and } u\geq 0 \Big\}
\end{aligned}
\end{equation*}
of locally finite point measures on $W^* \times [0,\infty).$
Let $\mathcal{B}([0,\infty))$ be the Borel $\sigma$-algebra on
$[0,\infty)$ and let $\mathcal{A}$
be the $\sigma$-algebra on $\Omega$ which
is generated by the family of evaluation maps $\omega \mapsto \omega[D]$, 
$D\in\mathcal{W}^{*}\otimes \mathcal{B}([0,\infty))$.
We denote by $\P$ the law of the Poisson point process on $(\Omega, \mathcal{A})$
with intensity measure $\nu \otimes {\rm d}u$. This process is usually referred to as the interlacement
Poisson point process.
Random interlacements at level $u$
is then defined as the subset of $\Z^d$ given by
\begin{equation*}
\mathcal{I}^{u}(\omega) = \bigcup_{u_i\leq u}
\mathrm{range}(w_i^{*}), \quad \mbox{ where } 
\omega = \sum_{i\geq 0} \delta_{(w_i^*,u_i)} \in \Omega,
\end{equation*}
and $\mathrm{range}(w^*) = \big\{w(n): \, n\in\Z\big\}$
for arbitrary  $w\in \pi^{-1}(\{w^*\})$.
The vacant set at level $u \ge 0$ is defined by
\begin{equation*}
\mathcal{V}^{u}(\omega) = \Z^d\setminus\mathcal{I}^{u}(\omega), \quad
\omega \in\Omega.
\end{equation*}
As has been shown in \cite{SZ10} and \cite{SSZ09}, in  any dimension
$d \ge 3$ there exists a $u_*(d) \in (0,\infty)$ such that
for $u \in [0,u_*(d))$ the vacant set $\mathcal V^u$ contains an infinite connected component, whereas for
$u \in (u_*(d), \infty)$ it consists of finite connected components.

\subsection{Cascading events and a decoupling inequality}
\label{S2.3}

In this section we give a slightly refined version of a decoupling inequality 
of  \cite[Theorem 3.4]{SZ12}.
This is a fundamental tool to deal with the dependence structure inherent to the model.
Since the constants appearing in the decoupling inequality depend implicitly on the dimension $d,$
we have to pay special attention to their behavior for large $d.$
Proposition
\ref{prop:decoupl} below states all these dependencies explicitly.
We write $\Psi_x$, $x\in\Z^d$, for 
the canonical coordinates on $\{0,1\}^{\Z^d}$.
Let us recall Definition 3.1 of \cite{SZ12} of so-called cascading events.
\begin{definition}[Cascading events] 
\label{def:cascade}
Let $\lambda > 0$. A family $\mathcal{G} = (G_{x,L})_{x\in\Z^d,L\geq 1 \mbox{ integer}}$
of events in $\{0,1\}^{\Z^d}$ cascades with complexity at most $\lambda$,
if
\begin{equation*}
G_{x,L} \mbox{ is } \sigma\left(\Psi_{x'}, x'\in B_2(x,10\sqrt{d}L)\right)-\mbox{measurable
for each $x \in\Z^d$, $L\geq 1$},
\end{equation*}
and for each multiple $l$ of $100$, $x\in\Z^d$, $L\geq 1$,
there exists $\Lambda\subseteq\Z^d$ and a constant $C_1=C_1(\mathcal{G},\lambda)$
such that
\begin{equation*}
\begin{aligned}
&\Lambda\subseteq B_2(x,9\sqrt{d}lL),\\
&|\Lambda|\leq C_1l^{\lambda},\\
&G_{x,lL}\subseteq \bigcup_{x',x''\in\Lambda\,:\,|x'-x''|_2 \geq \frac{l}{100}\sqrt{d}L} 
G_{x',L}\cap G_{x'',L}.
\end{aligned}
\end{equation*}
\end{definition}

\begin{remark}
\label{rm:cascade}
Note that the cascading events are defined with respect to the
$\ell^2$-norm instead of the more common $\ell^\infty$-norm.
Since we are working in a high dimensional setting, this makes the
constants appearing in Proposition \ref{prop:decoupl} easier to control.
This again is due to the fact that, see (1.22) and (1.23) of \cite{SZ11B}, there are
constants $c_2,c_3>0$, which do not depend on $d$, such that for all $L\geq d$,
\begin{equation} \label{eq:capBd}
\bigg(\frac{c_2L}{\sqrt{d}}\bigg)^{d-2}\leq \C(B_2(0,L))
\leq \bigg(\frac{c_3L}{\sqrt{d}}\bigg)^{d-2}.
\end{equation}
\end{remark}
The notions introduced below pertain to the so-called sprinkling 
technique. 
The idea is that with high probability the 
 mutual dependencies of the
events under consideration can be dominated by considering  random interlacements at
two different levels $u_{\infty}^{-} < u$.
For this purpose we introduce for $l_0$ positive the quantity
\begin{equation}
\label{eq:fdecoupl}
f(l_0) = \prod_{k\geq 0}
\bigg(1+32e^2c_1^{d}
\frac{1}{(k+1)^{\frac{3}{2}}}l_0^{-(d-3)/2}\bigg).
\end{equation}
The constant $c_1$ will be chosen according to the formulation of Proposition \ref{prop:decoupl}
below.
Furthermore, we define for $u>0$, $u_{\infty}^{-}= u_{\infty}^{-}(u) = \frac{u}{f(l_0)}$, as well as for $L_0\geq 1$,
\begin{equation}
\label{eq:epsilondec}
\varepsilon(u)= \frac{2e^{-uL_{0}^{d-2}l_{0}}}
{1-e^{-uL_{0}^{d-2}l_{0}}}.
\end{equation}
We let $L_0\geq 1$ and define the scales $L_n= l_0^nL_0$, $n\in\N_0$. 
$L_n$ and $l_0$ will play from now on the role of $L$ and $l$ in Definition \ref{def:cascade}.
Finally, for a subset $A \subset \{0,1\}^{\Z^d}$ and $u\geq 0$ we write
\begin{equation*}
A^{u}:=\Big\{\omega\in\Omega:\ \one_{\mathcal{I}^{u}(\omega)}
\in A\Big\}.
\end{equation*}
Also, $A \subset \{0,1\}^{\Z^d}$ is called  increasing if the following holds:
For all $\xi \in A$ and $\xi' \in \{0,1\}^{\Z^d}$ such  that
$\xi_x \le \xi'_x$ holds for all $x \in \Z^d,$ one has that $\xi' \in A$ also.

A refinement of the arguments in \cite{SZ12}, proof of Theorem 2.6
(with a special emphasis on the dependence of the constants on the dimension), leads to
the following result.
\begin{proposition}[Decoupling inequality]
\label{prop:decoupl}
Let $\lambda >0$. Consider $\mathcal{G}= (G_{x,L})_{x\in\Z^d, L\geq 1\mbox{ integer}}$
a collection of increasing events on $\{0,1\}^{\Z^d}$
that cascades with complexity at most $\lambda$.
Then
there are $c_0,c_1>1$ (the latter one comes into play in (\ref{eq:fdecoupl})) such that for all $l_0\geq 10^6\sqrt{d}c_0,$ 
all $L_0\geq\sqrt{d}$ and all $n\in\N_0,$
one has
\begin{equation}
\label{eq:decoupl}
\sup_{x\in\Z^d} \P\Big[G_{x,L_n}^{u_{\infty}^{-}}\Big]
\leq \Big(C_1l_0^{2\lambda}\Big)^{2^n}
\Bigg(\sup_{x\in\Z^d}\P\Big[G_{x,L_0}^{u_0}\Big]+ \varepsilon(u_{\infty}^{-})\Bigg)^{2^n}.
\end{equation}
\end{proposition}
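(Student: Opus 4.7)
The plan is to proceed by induction on $n$, closely following the scheme in the proof of Theorem~2.6 of \cite{SZ12}, but keeping careful track of the dimensional dependence of all constants. The heart of the argument is a one-step decoupling estimate that, for two far-apart increasing events at scale $L_n$, replaces the intersection probability at a low intensity by a product of marginals at a slightly higher intensity, paying a small additive error coming from a ``sprinkling'' of extra trajectories. Iteration along the cascading structure then produces the factor $2^n$ and the error $\varepsilon(u_\infty^-)$.

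First, I would set up an auxiliary decreasing sequence of levels $u = u_0 > u_1 > u_2 > \cdots$ by
\begin{equation*}
u_{n+1} = \frac{u_n}{1 + 32 e^2 c_1^d (n+1)^{-3/2} l_0^{-(d-3)/2}},
\end{equation*}
so that, by \eqref{eq:fdecoupl}, $\lim_{n} u_n = u/f(l_0) = u_\infty^{-}$. The gap $u_n - u_{n+1}$ at stage $n$ plays the role of the sprinkling reservoir at scale $L_n$.

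Second, I would establish the core one-step decoupling: for any two points $x', x'' \in \Z^d$ with $|x'-x''|_2 \geq \tfrac{l_0}{100}\sqrt{d}\, L_n$ and any two increasing events $G_{x', L_n}$, $G_{x'', L_n}$ that are, respectively, measurable with respect to the configuration on $B' := B_2(x', 10\sqrt{d}L_n)$ and $B'' := B_2(x'', 10\sqrt{d}L_n)$,
\begin{equation*}
\P\big[G_{x', L_n}^{u_{n+1}} \cap G_{x'', L_n}^{u_{n+1}}\big] \leq \P\big[G_{x', L_n}^{u_n}\big]\, \P\big[G_{x'', L_n}^{u_n}\big] + \eta_n
\end{equation*}
for a small error $\eta_n$. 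One decomposes the interlacement Poisson process at level $u_n$ into the three independent pieces consisting of trajectories hitting $B'$ only, $B''$ only, and both, then uses the monotonicity of the events to dominate the first two pieces by marginals at level $u_{n+1}$ supplemented by an independent sprinkling of mass $u_n - u_{n+1}$; the ``bad'' third piece is controlled by the probability that any two-box trajectory appears at all. Using the classical intensity bound $\nu[W_{B'}^{*} \cap W_{B''}^{*}] \leq \C(B')\,\C(B'')\, \sup_{y \in B',\, y' \in B''} g(y, y')$, together with the capacity estimate \eqref{eq:capBd} applied to $B_2(0, L_n)$ and a uniform Green-function bound at distance $\asymp l_0 \sqrt{d}\, L_n$, one checks that the multiplicative correction required for the sprinkling is of order $c_1^d (n+1)^{-3/2} l_0^{-(d-3)/2}$, precisely matching the factor in \eqref{eq:fdecoupl}. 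Tracking the constants here is the main technical obstacle: every capacity or Green-function estimate carries potentially dimension-dependent prefactors, and one must arrange, using the $\ell^2$-geometry emphasized in Remark~\ref{rm:cascade}, that only constants independent of $d$ survive outside the explicit $c_1^d$ factor.

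Third, I would carry out the induction. Given the claim at stage $n$, the cascading property of $\mathcal{G}$ yields a set $\Lambda \subseteq B_2(x, 9\sqrt{d} l_0 L_n)$ with $|\Lambda| \leq C_1 l_0^{\lambda}$ such that
\begin{equation*}
G_{x, L_{n+1}} \subseteq \bigcup_{\substack{x', x'' \in \Lambda \\ |x'-x''|_2 \geq \frac{l_0}{100}\sqrt{d} L_n}} G_{x', L_n} \cap G_{x'', L_n}.
\end{equation*}
A union bound over the at most $C_1^2 l_0^{2\lambda}$ pairs, combined with the one-step decoupling applied at the transition $u_n \to u_{n+1}$, produces the quadratic recursion $a_{n+1} \leq C_1 l_0^{2\lambda} (a_n + \eta_n^{1/2})^2$ for $a_n := \sup_{x \in \Z^d} \P[G_{x, L_n}^{u_n}]$. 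Unwinding this recursion in the usual way — and dominating the accumulated errors $\eta_n$ by $\varepsilon(u_\infty^{-})$ as given by \eqref{eq:epsilondec}, where the exponential factor $e^{-u L_0^{d-2} l_0}$ emerges from a crude bound on $\eta_0$ using the smallest scale $L_0 \geq \sqrt{d}$ and the first sprinkling step $u_0 - u_1$ — yields \eqref{eq:decoupl}. Finally, monotonicity of $u \mapsto \P[G^u]$ allows one to pass from $u_n$ to $u_\infty^{-}$ on the left-hand side, since $u_\infty^{-} \leq u_n$ and each $G_{x, L_n}$ is increasing.
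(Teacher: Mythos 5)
Your overall induction scheme (the levels $u_0>u_1>\cdots\to u_\infty^-$ with ratios taken from \eqref{eq:fdecoupl}, the cascading property plus a union bound over at most $C_1^2l_0^{2\lambda}$ pairs, a quadratic recursion, and a final monotonicity step) is sound and is essentially the paper's route, which merely organizes the same induction through the tree-indexed statement of Theorem \ref{thm:indstep} and the argument of \cite{SZ12}, Theorem 3.4. The genuine gap is in your one-step decoupling. You propose to control the trajectories hitting both $B'=B_2(x',10\sqrt dL_n)$ and $B''=B_2(x'',10\sqrt dL_n)$ ``by the probability that any two-box trajectory appears at all.'' That probability is not small: the number of such trajectories at level $u$ is Poisson with mean $u\,\nu[W^*_{B'}\cap W^*_{B''}]\ge u\,\C(B')\,\C(B'')\,\min g$, and by \eqref{eq:capBd} together with the Green function asymptotics at distance of order $l_0\sqrt d L_n$ this mean is of order $u\,(cL_n/l_0)^{d-2}=u\,(cL_{n-1})^{d-2}$, which is enormous for every $n\ge 1$ and diverges with $n$. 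So at all scales above the bottom one a two-box trajectory is present with overwhelming probability, your additive error $\eta_n$ is of order $1$ (vacuous), and no bookkeeping can make the accumulated errors sum to $\varepsilon(u_\infty^-)$. Relatedly, the sprinkling is attached to the wrong piece: the trajectories hitting only one of the boxes already produce a configuration dominated by the same-level marginal (the events are increasing), so they require no sprinkling; it is precisely the abundant both-hitting trajectories whose traces must be absorbed, and that is what the increment $u_n-u_{n+1}$ is for.

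What the actual proof does (following \cite{SZ12}, Theorem 2.1, with the dimension-explicit modifications of the appendix) is to dominate the excursions that the long-range trajectories make into the relevant smaller boxes by the excursions of an \emph{independent} interlacement of intensity $u-u'$: the sweeping identity and a Harnack chain of bounded length $c_7$ around $\partial_{\mathrm{int}}U$ (Proposition \ref{prop:harnack}) show in Lemma \ref{lem:equilbrest} that each such excursion enters $\widetilde W$ with a law comparable, up to factors $c_5^d,c_6^dL_{n+1}^{-(d-2)}$, to the normalized equilibrium measure; the condition $\frac{u-u'}{u'}\gtrsim c_1^d(n+1)^{-3/2}l_0^{-(d-3)/2}$ is exactly what makes this stochastic domination possible, and this — not the bound on $\nu[W^*_{B'}\cap W^*_{B''}]$ — is where the factor in \eqref{eq:fdecoupl} comes from. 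The additive error $2e^{-2u'(n+1)^{-3}L_n^{d-2}l_0}$ in Theorem \ref{thm:indstep} is a Poisson large-deviation estimate on the \emph{number} of long-range trajectories (small because that number concentrates around a large mean, the opposite of rarity), and summing these errors over the splits of the tree produces $\varepsilon(u_\infty^-)$ in \eqref{eq:epsilondec}. The dimension tracking you correctly identify as the main obstacle is carried out exactly in these two ingredients (the Harnack constant $c_4^{dc_7}$, the constants $c_5^d,c_6^d$, and the $d$-independent choice of $M$ via \eqref{eq:capBd}), none of which is recovered by your existence-probability computation; in particular the claim that your error ``precisely matches'' the factor in \eqref{eq:fdecoupl} is unsubstantiated.
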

See Appendix \ref{A} for the proof.


\section{Proof of Theorem \ref{thm:mainres}}
\label{S3}

In this section we introduce a classification  of vertices in  $\Z^3$ into ``good''
(exhibiting good connectivity properties, see Definition \ref{def:Gyu} below) and ``bad'' vertices.
Subsequently, we give two auxiliary results on the existence of an infinite connected component 
of good vertices (Proposition
\ref{prop:infiniteGoodness}) which is transient as a subset of $\mathbb \Z^3$
(Proposition \ref{prop:transofG}).
From the latter result we deduce Theorem \ref{thm:mainres}.

\subsection{Auxiliary results}
\label{S3.1}
Let $\mathcal C_y=2y+\{0,1\}^d$, $y\in\Z^d$, and $\mathcal C = \mathcal C_0$.
\begin{definition}
\label{def:Gyu}
Let $u\ge 0$. A vertex $y \in \Z^3$ is defined to be u-good 
(with respect to $\omega \in \Omega$)
if
\begin{equation}
\label{eq:Gyu}
\begin{aligned}
\omega \in \mathcal G_{y,u}
:= \Bigg \{ \omega \in \Omega \, \colon \,  &\forall z \in \Z^3 \text{ with } | y-z |_1 \le 1, \; 
\text{ the set }\mathcal V^u(\omega) \cap \mathcal C_z
\text{ contains a connected}\\
& \text{component } 
\mathfrak C_{y,z} \text{ with } \vert \overline{{\mathfrak C}_{y,z}} \cap \mathcal C_z \vert \ge (1-d^{-2}) \vert \mathcal C_z \vert,
\text{ and these}\\
&\text{components are connected in } \mathcal V^u(\omega) \cap \Bigg( \bigcup_{ z\in\Z^3\, : \, | y-z |_1 \le 1} \mathcal C_{z} \Bigg) \Bigg\}.
\end{aligned}
\end{equation}
Otherwise, $y$ is called u-bad (with respect to $\omega \in \Omega$).
\end{definition}
\begin{remark}
\label{rm:unqiue}
Lemma 2.1 in \cite{SZ11} states that there is a $d_0\in\N$ such that if $d\geq d_0$, then 
any subset $V \subset \mathcal C$ contains at most one connected component
$\mathfrak C$ of $V$
such that $| \overline{\mathfrak C} \cap \mathcal C|\geq (1-d^{-2})|\mathcal C|$.
Thus, for $d\geq d_0$, if a connected component $\mathfrak C_{y,z}$ 
as in \eqref{eq:Gyu} exists, then it is necessarily unique.

\end{remark}
Denote by 
\begin{equation}
\label{eq:BG}
\mathcal G^u (\omega) := \{y \in \Z^3 : \omega \in \mathcal G_{y,u}\}
\quad \text{ and } \quad
\mathcal B^u (\omega) := \Z^3 \backslash \mathcal G^u(\omega)
\end{equation}
the  set of $u$-good  and $u$-bad vertices given $\omega.$
We can now state the auxiliary results alluded to above.
\begin{proposition}[Existence of an infinite connected component of good vertices]
\label{prop:infiniteGoodness}
Fix $\varepsilon \in (0,1)$.
There is $d_0=d_0(\varepsilon)\in\N$ such that for all $d\geq d_0$ and
$u\leq (1-\varepsilon)u_*(d)$, $\P$-a.s.
there exists an infinite connected component in $\mathcal G^u$. 
\end{proposition}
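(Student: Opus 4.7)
\emph{Overall strategy.} The plan is to prove percolation of $\mathcal G^u$ in $\Z^3$ by a renormalization argument, using Proposition~\ref{prop:decoupl} to absorb the long-range correlations intrinsic to random interlacements. Since bad-ness of a vertex is monotone increasing in the interlacement indicator field, one has $\mathcal G^u\supseteq\mathcal G^{u'}$ whenever $u\le u'$, so it suffices to prove the conclusion at a single intensity strictly above $(1-\varepsilon)u_*(d)$.

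\emph{Step~1 (single-site estimate).} The principal input is the bound
\[
\sup_{y\in\Z^3}\P[y\in\mathcal B^u]\xrightarrow[d\to\infty]{}0,
\qquad\text{uniformly for } u\le(1-\tfrac{\varepsilon}{2})u_*(d),
\]
to be established in Section~\ref{S4}. It rests on (i) the asymptotic $u_*(d)\sim\log d$ of \cite{SZ11,SZ11B}, which keeps the vacant density in each cube $\mathcal C_z$ bounded below by a polynomial in $d$; (ii) the expander-like isoperimetry of the Boolean lattice $\{0,1\}^d$, forcing a unique giant component in $\mathcal V^u\cap\mathcal C_z$ of closure-density at least $1-d^{-2}$ (cf.\ Lemma~2.1 of \cite{SZ11} and Remark~\ref{rm:unqiue}); and (iii) a joining argument along the shared faces of $\mathcal C_y$ and its six nearest cubes $\mathcal C_z$ guaranteeing that the giant components are linked inside $\mathcal V^u$.

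\emph{Step~2 (cascade and decoupling).} Pick $\varepsilon'\in(\tfrac{\varepsilon}{2},\varepsilon)$ and set $u:=(1-\varepsilon')u_*(d)$ and $u_\infty^-:=u/f(l_0)$; for $l_0$ large enough (independently of $d$), $u_\infty^-\ge(1-\varepsilon)u_*(d)$ while $u\le(1-\tfrac{\varepsilon}{2})u_*(d)$, so Step~1 applies at intensity~$u$. For $x\in\Z^3$ and $L\ge 1$, define the event on $\{0,1\}^{\Z^d}$
\[
G_{x,L} := \Big\{\xi\in\{0,1\}^{\Z^d}\colon\ \text{no $\Z^3$-path of $\xi$-good vertices joins $B_\infty^3(x,L/3)$ to $\partial B_\infty^3(x,L)$}\Big\}.
\]
Monotonicity of bad-ness in $\xi$ makes $G_{x,L}$ increasing; since the bad-ness of $y$ is $\sigma(\Psi_{x'}\colon x'\in B_2(y,c\sqrt d))$-measurable for some absolute constant $c$, the event $G_{x,L}$ is $\sigma(\Psi_{x'}\colon x'\in B_2(x,10\sqrt d\,L))$-measurable once $L_0\ge\sqrt d$. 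The cascading property follows from the $\Z^3$-geometry of blocking: absence of a good crossing at scale $lL$ forces a $*$-connected bad wall of diameter $\gtrsim lL$ separating the two boundaries, which must contain blocking sub-walls at scale $L$ centred at two sites in a set $\Lambda\subset B_2(x,9\sqrt d\,lL)$ of polynomial size and with pairwise $\ell^2$-distance at least $\tfrac{l}{100}\sqrt d\,L$. The resulting complexity $\lambda$ depends only on~$\Z^3$.

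\emph{Step~3 (conclusion).} Step~1 makes $\sup_x\P[G_{x,L_0}^u]$ arbitrarily small for $d$ large, and taking $L_0\ge\sqrt d$ large in addition makes $\varepsilon(u_\infty^-)$ small, so that $\sup_x\P[G_{x,L_0}^u]+\varepsilon(u_\infty^-)<\tfrac{1}{2C_1 l_0^{2\lambda}}$. Proposition~\ref{prop:decoupl} then yields $\P[G_{0,L_n}^{u_\infty^-}]\le 2^{-2^n}$, which is summable. By Borel--Cantelli, $\P$-a.s.\ there exists, for every sufficiently large $n$, a path of good vertices from $B_\infty^3(0,L_n/3)$ to $\partial B_\infty^3(0,L_n)$; chaining these along a growing subsequence of scales produces an infinite connected component of $\mathcal G^{u_\infty^-}$, which by monotonicity is contained in $\mathcal G^{(1-\varepsilon)u_*(d)}$.

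\emph{Main obstacle.} The crux is Step~1: at $u$ a positive fraction of $u_*(d)$, the interlacement already populates $\mathcal C_z$ densely, so the existence of the giant vacant component and its closure-density $\ge 1-d^{-2}$ cannot be read off from first-moment density bounds but must be extracted from hypercube expansion together with a second-moment analysis whose error terms are uniformly small in $u\le(1-\tfrac{\varepsilon}{2})u_*(d)$. A secondary difficulty is to verify the cascade with a complexity $\lambda$ independent of $d$, which forces the blocking argument to exploit the fixed three-dimensional topology of $\mathcal G^u$ rather than the ambient $\Z^d$-geometry.
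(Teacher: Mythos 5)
Your overall architecture (a single-site seed estimate at a slightly higher level, the decoupling inequality along the scales $L_n$, and monotonicity in $u$) matches the paper's, and your Step~1 is essentially what the paper imports from Theorem 4.2 of \cite{SZ11} via Lemma \ref{lem:G}. However, Step~2 has a genuine gap: the events $G_{x,L}=\{\text{no good crossing of the annulus}\}$ do not cascade. Cascading requires $G_{x,l_0L}\subseteq\bigcup_{x',x''}G_{x',L}\cap G_{x'',L}$ for well-separated $x',x''$, i.e.\ that absence of a good crossing at the large scale forces absence of good crossings at scale $L$ at two distant locations. This fails: absence of a good crossing at scale $l_0L$ means the bad set contains a separating set around the inner box, but near any point of such a set the bad vertices form, locally, a two-dimensional sheet in $\Z^3$, and a scale-$L$ annulus centred on that sheet is crossed by good paths running parallel to it or starting just off it (take the bad set to be a single flat sheet: then no small annulus is blocked at all, so your claimed ``blocking sub-walls at scale $L$'' need not exist). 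Surface/blocking-type events are precisely the ones that do not cascade; what does cascade (with complexity $3$, as in (3.10) of \cite{SZ12}) is the existence of a bad $*$-path crossing an annulus, and this is the family $A_{x,L}$ the paper works with.

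Even granting crossing estimates at every scale, Step~3 does not yield the proposition: Borel--Cantelli gives, a.s., a good crossing of each annulus $B^3_\infty(0,L_n)\setminus B^3_\infty(0,L_n/3)$ for all large $n$, but these crossings at different scales need not intersect, so ``chaining'' them does not produce an infinite connected component of $\mathcal G^u$. The paper closes both gaps differently: from the cascaded bound on bad $*$-crossings it first deduces that all $*$-connected components of bad vertices are small (Proposition \ref{prop:badComponentsSmall}), and then runs a deterministic geometric argument (the claim \eqref{eq:claim}, relying on Tim\'ar's result that exterior boundaries of the finite good clusters are $*$-connected) showing that if every good cluster meeting $B_\infty^3(0,N)$ were finite, there would exist a long simple $*$-path of bad vertices; a union bound makes this event have probability less than one for large $N$, giving an infinite good cluster with positive probability, and ergodicity of $\P$ under $\Z^3$-shifts upgrades this to almost surely. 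Both of these ingredients --- the conversion of ``all good clusters finite'' into a long bad $*$-path, and the positive-probability-to-a.s.\ step --- are missing from your proposal, and the cascade you rely on in their place is not valid.
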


\begin{remark}
Using Proposition \ref{prop:badComponentsSmall} below,
it is not hard to establish the uniqueness of this infinite connected component. However,
since we do not need this uniqueness, we will not give a proof of this fact.
\end{remark}

In the forthcoming proposition all parameters are chosen according to Proposition 
\ref{prop:infiniteGoodness} above. From now on,  $\mathcal{G}_{\infty}^{u}$ will denote 
an arbitrary infinite connected component of $\mathcal G^u$.

\begin{proposition}[Transience of $\mathcal{G}_{\infty}^{u}$]
\label{prop:transofG}
Fix $\varepsilon \in (0,1)$.
There is $d_0=d_0(\varepsilon)\in\N$ such that for all $d\geq d_0$ and
$u\leq (1-\varepsilon)u_*(d)$, one has that $\mathcal{G}_{\infty}^{u}$ is transient $\P$-a.s.
\end{proposition}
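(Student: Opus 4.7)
The plan is to follow the general strategy of Angel, Benjamini, Berger and Peres \cite{AnBeBePe-06}: to construct a unit flow on the edge set of $\mathcal{G}_{\infty}^{u}$ from a fixed vertex to infinity with finite energy, starting from the natural transient flow on $\Z^{3}$ and rerouting it around $u$-bad vertices. Since $\Z^{3}$ is transient, there exists a unit flow $\theta$ from $0$ to $\infty$ on $\Z^{3}$ of finite energy $\sum_{e}\theta(e)^{2}<\infty$; by T.\ Lyons' criterion, transience of $\mathcal{G}_{\infty}^{u}$ is equivalent to the existence of such a flow supported on its edge set.

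The first step is to establish a sufficiently strong tail bound on the diameter of the $*$-connected components of $u$-bad vertices in $\Z^{3}$. I would check that the family of events ``$B_{\infty}^{3}(x,L)$ contains a $*$-path of $u$-bad vertices of $\ell^{\infty}$-diameter at least $L/2$'' cascades in the sense of Definition \ref{def:cascade} with complexity bounded independently of $d$, and then invoke Proposition \ref{prop:decoupl} to deduce a stretched exponential bound of the form $\P[\mathrm{diam}(\mathcal{B}_{y})\ge R]\le C e^{-c R^{\alpha}}$ for the $*$-cluster $\mathcal{B}_{y}$ of $u$-bad vertices containing $y$. The hypothesis $u\le(1-\varepsilon)u_{*}(d)$ enters here: using $u_{*}(d)\sim\log d$ together with the capacity estimates \eqref{eq:capBd}, the probability that a fixed vertex is $u$-bad can be made of order a negative power of $d$, providing a small enough seed probability at the base scale to run the decoupling iteration.

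Given this tail bound, the rerouting proceeds as follows. Almost surely every $*$-cluster $\mathcal{B}$ of bad vertices in $\Z^{3}$ is finite; a standard topological argument in $\Z^{3}$ shows that its exterior $*$-boundary $\partial_{\mathrm{ext}}^{*}\mathcal{B}$ is contained in $\mathcal{G}_{\infty}^{u}$ and is $*$-connected, so one may replace each edge of $\Z^{3}$ that has an endpoint in $\mathcal{B}$ by a short path in $\mathcal{G}_{\infty}^{u}$ routed through $\partial_{\mathrm{ext}}^{*}\mathcal{B}$. This defines a new flow $\widetilde{\theta}$ supported on $\mathcal{G}_{\infty}^{u}$, and the additional energy introduced by a single cluster $\mathcal{B}$ is at most $C\,|\mathcal{B}|\,(\mathrm{diam}(\mathcal{B}))^{2}\max_{e\cap\mathcal{B}\neq\emptyset}\theta(e)^{2}$. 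Summing over all bad clusters intersected by the support of $\theta$, the tail bound from the first step combined with the finite energy of $\theta$ and the Borel--Cantelli lemma yields $\sum_{e}\widetilde{\theta}(e)^{2}<\infty$ almost surely, which gives transience of $\mathcal{G}_{\infty}^{u}$.

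The main obstacle will be the first step. The event ``$y$ is $u$-bad'' combines a requirement of the existence of a large connected component of $\mathcal{V}^{u}$ in each neighbouring unit cube $\mathcal{C}_{z}$ with the requirement that these components be connected to one another, so it is neither monotone in the interlacement nor local in a fully convenient way. Verifying that long bad $*$-paths form a cascading family therefore requires a careful geometric argument showing that such a path can be decomposed into two well-separated shorter bad $*$-paths inside the relevant half-scale subboxes, while quantifying the seed probability uniformly for $u\le(1-\varepsilon)u_{*}(d)$ relies essentially on the sharp asymptotics $u_{*}(d)\sim\log d$. Once this estimate is secured, the flow construction and energy bound are comparatively routine.
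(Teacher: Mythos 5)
Your overall strategy is the same as the paper's: following \cite{AnBeBePe-06}, take a transient flow (the paper works with a probability measure on infinite simple paths, via Lemma \ref{lem:transienceCharact}) on $\Z^3$, and reroute it around the $*$-clusters of $u$-bad vertices through their exterior $*$-boundaries, which lie in $\mathcal{G}_{\infty}^{u}$ by Tim\'ar's theorem; the smallness of bad clusters then controls the extra energy. The genuine gap is in your first step, where you claim that the seed probability for the decoupling iteration -- i.e.\ that a fixed vertex is $u$-bad with probability at most a negative power of $d$, uniformly for $u\le(1-\varepsilon)u_*(d)$ -- follows from the asymptotics $u_*(d)\sim\log d$ together with the capacity estimates \eqref{eq:capBd}. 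This is not a capacity/union-bound computation: the event $\mathcal{G}_{y,u}$ requires that $\mathcal{V}^u$ contain, in each of the seven neighbouring hypercubes $\mathcal{C}_z$, a connected component occupying a $(1-d^{-2})$-fraction of the cube, with all these components mutually connected, and establishing that this holds with probability $1-O(d^{-7})$ at intensities comparable to $u_*(d)$ is precisely the hard high-dimensional input. In the paper this is Lemma \ref{lem:G}, which rests on Theorem 4.2 of \cite{SZ11} (Sznitman's lower bound on the critical parameter), applied in two transverse two-dimensional sublattices and combined via the uniqueness statement of Remark \ref{rm:unqiue}. Without citing that result (or, more simply, without invoking Proposition \ref{prop:badComponentsSmall}, which the paper proves exactly to supply the tail bound you re-derive), your renormalization has no base-scale estimate and the argument does not close.

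Two smaller points. First, your worry that badness is ``not monotone in the interlacement'' is unfounded: enlarging $\mathcal{I}^u$ only shrinks $\mathcal{V}^u$, so $\mathcal{G}_{y,u}^c$ is increasing in $\one_{\mathcal{I}^u}$, which is exactly what lets the paper feed the crossing events $A_{x,L}$ into Proposition \ref{prop:decoupl}. Second, your energy bookkeeping is looser than it should be: for an arbitrary finite-energy flow $\theta$, summing per-cluster costs of order $\mathrm{poly}(|\mathcal{B}|)\max_{e}\theta(e)^2$ with cluster sizes growing like $(\log N)^{C}$ (Borel--Cantelli) produces a location-dependent polylogarithmic factor that a general finite-energy flow need not absorb; you should either fix the explicit flow with $\theta(e)\asymp|e|^{-2}$ in $\Z^3$, or argue as the paper does, bounding $\sum_x\mu^2[x\in\widehat\varphi(\pi)]$ by $\sum_y\mu^2[y\in\pi]\sum_{x\in S(y)}|T(x)|$ and showing that $\sup_x\E\big[\sum_{y\in S(x)}|T(y)|\big]<\infty$ (Lemma \ref{lem:finiteenergy}), so that a.s.\ finiteness follows by taking expectations against the deterministic measure $\mu$.
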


The above two results will be proven in  Sections \ref{S4} and \ref{S5}.

\subsection{Proof of Theorem \ref{thm:mainres} given Propositions \ref{prop:infiniteGoodness} and
\ref{prop:transofG}}
\label{S3.2}
In this section we show how Theorem \ref{thm:mainres}
can be deduced from Propositions \ref{prop:infiniteGoodness}
and \ref{prop:transofG}.
For a connected subset $G$ of $\Z^d$,
let $\Pi_y (G),$ $y \in G$,  be the set of infinite
non-intersection (which we will also call simple for the sake of brevity) nearest neighbor paths on $G$ starting in $y$.
We recall the following characterization of the transience of $G.$
\begin{lemma} \label{lem:transienceCharact}
The following are equivalent:
\begin{enumerate} [label=(\alph{*}), ref=(\alph{*})]
 \item \label{item:Gtransient}
The graph $G$ (with connectivity structure induced by $\Z^d$) is transient.
\item \label{item:finiteExpVerticesIII}
There is $y\in G$ such that there is a probability measure $\mu$ on $\Pi_y(G)$ fulfilling
\begin{equation} \label{eq:finiteExpVerticesIII}
 \sum_{ x \in V} \mu^2 \big [\pi \in \Pi_y(G) :  x \in \pi \big ] < \infty.
\end{equation}
\end{enumerate}
\end{lemma}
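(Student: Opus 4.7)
The statement is the classical energy characterization of graph transience, in the spirit of the Lyons--Thomson--Nash--Williams theory. My plan is to pass through the equivalent flow formulation: a connected graph $G$ of bounded degree is transient if and only if there exist a vertex $y \in G$ and a unit flow $\theta$ on the directed edges of $G$ from $y$ to infinity with finite energy $\sum_{e}\theta(e)^2<\infty$, where $\theta$ is required to satisfy Kirchhoff's node law at every $x \neq y$ and to have net outflow $1$ at $y$.

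For \ref{item:finiteExpVerticesIII} $\Rightarrow$ \ref{item:Gtransient}, given $\mu$ on $\Pi_y(G)$, I would set
\begin{equation*}
\theta(x,x') := \mu\big\{\pi \in \Pi_y(G) : \pi \text{ traverses the directed edge } (x,x') \big\}.
\end{equation*}
Each $\pi \in \Pi_y(G)$ being simple and unbounded, each visit to $x \neq y$ matches one incoming edge to one outgoing edge, while $y$ is left exactly once and never re-entered. Summing over $\pi$ yields Kirchhoff's law and shows that $\theta$ is a unit flow from $y$ to infinity. Since $\theta(x,x') \le \mu\{\pi : x \in \pi\}$ and each vertex of $G$ has at most $2d$ neighbors,
\begin{equation*}
\sum_{e} \theta(e)^2 \;\le\; 2d \sum_{x \in V} \mu\{\pi : x \in \pi\}^2 \;<\; \infty
\end{equation*}
by \eqref{eq:finiteExpVerticesIII}. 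Hence $G$ admits a finite-energy unit flow from $y$ and is therefore transient.

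For the converse \ref{item:Gtransient} $\Rightarrow$ \ref{item:finiteExpVerticesIII}, transience provides such a flow $\theta$ from some $y$ with $\sum_e \theta(e)^2 < \infty$. A standard cycle-cancellation argument (approximating by finite truncations and exploiting that cycle cancellation does not increase the $\ell^2$-energy) allows one to take $\theta$ acyclic at no extra cost. I would then define the Markov chain on $G$ that jumps from $x$ to $x'$ with probability proportional to $\theta(x,x')$; by acyclicity and the fact that $\theta$ is a unit flow to infinity, its trajectories are almost surely simple infinite paths in $\Pi_y(G)$, and their law $\mu$ satisfies $\mu\{\pi : (x,x') \in \pi\} = \theta(x,x')$. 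At each $x \neq y$ one then has
\begin{equation*}
\mu\{\pi : x \in \pi\} \;=\; \sum_{x' \sim x} \theta(x,x'),
\end{equation*}
and Cauchy--Schwarz together with bounded degree give $\mu\{\pi : x \in \pi\}^2 \le 2d \sum_{x'\sim x} \theta(x,x')^2$, so summing over $x$ recovers \eqref{eq:finiteExpVerticesIII}.

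The main subtlety lies in the \ref{item:Gtransient} $\Rightarrow$ \ref{item:finiteExpVerticesIII} direction: converting a finite-energy flow, a purely additive object, into a probability measure supported on \emph{simple} paths. Cycle cancellation on an infinite graph needs some care, but is standard; an alternative route (closer in spirit to the approach of \cite{AnBeBePe-06}) is to take $\mu$ to be the law of the loop-erasure of the simple random walk on $G$ started at $y$ and conditioned to escape to infinity, and to read off the energy estimate from an effective-resistance computation. Either route delivers the claim.
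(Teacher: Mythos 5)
Your argument is correct in substance, but it takes a different route from the paper: the paper does not prove this lemma at all, it simply invokes the edge-indexed version of the path--energy criterion from \cite[Chapter 2]{LyPe01} and \cite{Pe-99} and remarks that the vertex-indexed form \eqref{eq:finiteExpVerticesIII} follows because $\Z^d$ has uniformly bounded degree (exactly the Cauchy--Schwarz/degree step you use twice). What you do instead is re-derive the criterion from scratch through T.~Lyons' flow characterization: your direction \ref{item:finiteExpVerticesIII}$\Rightarrow$\ref{item:Gtransient} (traversal probabilities of a path measure form a unit flow, energy controlled by $2d\sum_x\mu\{x\in\pi\}^2$) is complete and is precisely the easy half of the cited result; your converse reconstructs a path measure from a finite-energy flow, which is the harder half that the references prove. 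The trade-off is self-containedness versus the delicate points you are implicitly assuming: cycle cancellation on an infinite network and, more importantly, the identity $\mu\{\pi:(x,x')\in\pi\}=\theta(x,x')$ for the chain with transitions proportional to $\theta$. That identity requires not just acyclicity but also that no flow ``enters from infinity''; the cheapest fix is to take $\theta$ to be the unit current flow (the negative gradient of the Green function of the walk on $G$), which is automatically acyclic and for which the inductive identity $\P[\text{visit }x]=\theta_{\mathrm{in}}(x)$ can be propagated from $y$, or alternatively to settle for the inequality $\mu\{\pi:(x,x')\in\pi\}\le\theta(x,x')$ after pruning, which is all your energy bound needs.

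One caution on your closing aside: taking $\mu$ to be the law of the loop-erasure of the random walk on $G$ does \emph{not} obviously deliver \eqref{eq:finiteExpVerticesIII}. Already for $G=\Z^3$ the natural bound $\mu\{x\in\pi\}\le P_y[x \text{ is visited}]\asymp|x|^{-1}$ gives $\sum_x|x|^{-2}=\infty$, so the vertex-energy finiteness of the loop-erased walk measure is not a consequence of an effective-resistance computation alone; this is exactly why the standard proofs (and \cite{AnBeBePe-06}) go through flow-type or geometric constructions of low-energy path measures rather than through the walk itself. Since this was offered only as an alternative, it does not affect the validity of your main argument.
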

\begin{remark}
A version similar to Lemma \ref{lem:transienceCharact} (in a more general context)
may be found in \cite[Chapter 2]{LyPe01} and \cite{Pe-99}. Note that in these two references the sum in (\ref{eq:finiteExpVerticesIII}) 
is taken over nearest neighbor edges  whose ends both lie in $G,$ rather than over the vertices.
However, using the fact that $\Z^d$ has uniformly bounded degree, one can deduce Lemma \ref{lem:transienceCharact} from the corresponding
edge-based version without problems. We will omit the proof of this fact.
\end{remark}
The strategy to prove Theorem \ref{thm:mainres} 
is as follows: Since by Propositions \ref{prop:infiniteGoodness} and \ref{prop:transofG} the subset
$\mathcal{G}_{\infty}^{u}$ of $\Z^3$
is transient for $d$ and $u$ as in the assumptions,
Lemma \ref{lem:transienceCharact} provides us with a measure $\mu$ on the simple nearest neighbor paths in $\mathcal G_\infty^u$
fulfilling \eqref{eq:finiteExpVerticesIII} for $G = \mathcal G_\infty^u.$ 
We then map simple nearest neighbor paths in $\mathcal G_\infty^u$
to simple nearest neighbor paths in $\mathcal V_\infty^u,$
in a way that does not blow up the lengths of the paths too much, cf. \eqref{eq:pitilde} below.
The pushforward of $\mu$ under  this mapping then supplies us with a probability measure 
supported on infinite simple
nearest neighbor paths in $\mathcal V_\infty^u$
that still satisfies condition \eqref{eq:finiteExpVerticesIII} (cf. Claim \ref{cl:phitilde}). We now make this strategy precise.

\begin{proof}[Proof of Theorem \ref{thm:mainres}]
We fix $\varepsilon\in(0,1)$ and choose $d_0=d_0(\varepsilon)$
such that the implications of both, Proposition \ref{prop:infiniteGoodness} and \ref{prop:transofG}, hold true.
Write
\begin{equation*}
\Gamma \colon\, \mathcal{G}_{\infty}^{u} \rightarrow \bigcup_{y \in \mathcal G_\infty^u} \mathfrak C_{y,y}
\end{equation*}
for the mapping
that sends $y \in \mathcal G_{\infty}^u$ to the element $z \in \mathfrak{C}_{y,y}$ that has minimal lexicographical order among all elements of
$\mathfrak{C}_{y,y}.$
Moreover, by the definition of $\mathcal G_\infty^u,$ for each 
\begin{equation} \label{eq:admissPair}
x, y \in \mathcal{G}_{\infty}^{u} \text{ with } |x-y|_1=1 
\end{equation}
there is
a simple nearest neighbor path $(\widetilde \pi_y^x(k))_{k=0}^{n-1}$
on 
\begin{equation*}
\mathcal{V}_\infty^{u}\cap
\{\mathcal C_z\, \colon\, z \in \Z^3 \text{ and } |y-z|_1\leq 1\}
\end{equation*}
such that \\
\begin{equation}
\label{eq:pitilde}
\begin{aligned}
& \bullet \widetilde  \pi_y^x(0) = \Gamma(y) \text{ and }
\widetilde  \pi_y^x(n-1) = \Gamma(x);\\
&\bullet n\leq   \Big \vert \bigcup_{z \in \Z^3 \, : \, \vert y - z \vert_1 \le 1	} 
 \mathfrak C_{y, z} \Big \vert \le 7\times2^{d}.
\end{aligned}
\end{equation}
For any pair of points $x$ and $y$ as in \eqref{eq:admissPair} we
choose and fix a path $\widetilde \pi_y^x$ with the above properties.
Given an infinite simple nearest neighbor path $\pi$
on $\mathcal{G}_{\infty}^{u}$, we obtain an infinite nearest neighbor path $\widetilde \pi$ on $\mathcal V_\infty^u$ starting in $\Gamma(\pi (0))$
by concatenating the paths $\widetilde \pi_{\pi(k)}^{\pi(k+1)},$ $k = 0, 1, 2, \ldots.$
Finally, we denote by $\varphi$ the map that sends $\pi$ to  the loop-erasure 
of $\widetilde \pi$ (note that the latter is an infinite simple nearest neighbor path in $\mathcal V_{\infty}^u$).

Now due to Proposition \ref{prop:transofG} and Lemma \ref{lem:transienceCharact} there exists a 
probability measure $\mu$ on $\Pi_y(\mathcal G_\infty^u)$, for some $y\in \mathcal G_\infty^u$,
fulfilling \eqref{eq:finiteExpVerticesIII}.
Hence, Theorem \ref{thm:mainres} is a consequence of the claim below and Lemma \ref{lem:transienceCharact}.
\end{proof}

\begin{claim}
\label{cl:phitilde}
If a measure $\mu$ on $\Pi_y(\mathcal G_\infty^u)$ fulfills \eqref{eq:finiteExpVerticesIII},
then so does the measure $\widetilde \mu := \mu \circ \varphi^{-1}$ on $\Pi_{\Gamma(y)}(\mathcal V_\infty^u).$
\end{claim}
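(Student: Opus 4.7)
The plan is to dominate the quadratic sum for $\widetilde\mu$ by the analogous sum for $\mu$ up to a multiplicative constant depending on $d$ only. Two structural facts drive the argument: by \eqref{eq:pitilde} each inserted segment $\widetilde\pi_{\pi(k)}^{\pi(k+1)}$ is contained in the thickening $\bigcup_{z \in \Z^3,\,|z-\pi(k)|_1 \le 1} \mathcal C_z$, and the cells $\{\mathcal C_z\}_{z \in \Z^d}$ form a partition of $\Z^d$ with $|\mathcal C_z| = 2^d$. Together these let me ``project'' a vertex of $\widetilde\pi$ in $\Z^d$ back to a nearby vertex of $\pi$ in $\Z^3$.

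The key observation is as follows. For $x \in \Z^d$, let $z_x \in \Z^d$ be the unique element with $x \in \mathcal C_{z_x}$. If $x$ lies on $\varphi(\pi)$ then (since a loop-erasure has vertex set contained in that of the original concatenation) $x$ lies on some segment $\widetilde\pi_{\pi(k)}^{\pi(k+1)}$, which by \eqref{eq:pitilde} forces $z_x \in \Z^3$ and $|z_x - \pi(k)|_1 \le 1$. Setting $N(z_x) := \{z' \in \Z^3 : |z' - z_x|_1 \le 1\}$ (a set of cardinality at most $7$), we see that $\pi$ must visit $N(z_x)$. Consequently
\begin{equation*}
\widetilde\mu\bigl[\pi' : x \in \pi'\bigr] \;=\; \mu\bigl[\pi : x \in \varphi(\pi)\bigr] \;\le\; \sum_{z' \in N(z_x)} \mu\bigl[\pi : z' \in \pi\bigr],
\end{equation*}
and using $(a_1 + \cdots + a_7)^2 \le 7(a_1^2 + \cdots + a_7^2)$ gives the corresponding square bound.

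Summing over $x \in \Z^d$ (only the $x$ with $z_x \in \Z^3$ can contribute) and regrouping by the value $z := z_x$, using $|\mathcal C_z| = 2^d$ and that any fixed $z' \in \Z^3$ lies in $N(z)$ for at most $7$ choices of $z$, I obtain
\begin{equation*}
\sum_{x \in \Z^d} \widetilde\mu[x \in \pi']^2 \;\le\; 7 \sum_{z \in \Z^3} |\mathcal C_z| \sum_{z' \in N(z)} \mu[z' \in \pi]^2 \;\le\; 49 \cdot 2^d \sum_{z' \in \Z^3} \mu[z' \in \pi]^2,
\end{equation*}
which is finite by the hypothesis \eqref{eq:finiteExpVerticesIII} applied to $\mu$. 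Combined with Lemma \ref{lem:transienceCharact}, this proves the claim and hence Theorem \ref{thm:mainres}.

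I do not anticipate a serious obstacle here; this is essentially bookkeeping once one sees that each vertex of $\widetilde\pi$ is trapped inside a $\Z^3$-indexed cell adjacent to some $\pi(k)$. The mildly delicate check is that $\varphi(\pi)$ is genuinely an infinite simple path in $\mathcal V_\infty^u$ starting at $\Gamma(y)$ (so that $\widetilde\mu$ is indeed a probability measure on $\Pi_{\Gamma(y)}(\mathcal V_\infty^u)$), but this is built into the construction: each $\widetilde\pi_{\pi(k)}^{\pi(k+1)}$ has length at most $7 \cdot 2^d$ and connects $\Gamma(\pi(k))$ to $\Gamma(\pi(k+1))$, the concatenation escapes to infinity because $\pi$ does, and loop-erasure finally enforces simplicity. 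The $2^d$ blow-up in the constant is $d$-dependent but harmless, since transience only requires a finite bound.
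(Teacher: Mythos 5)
Your proof is correct and follows essentially the same route as the paper: project each vertex $x$ of $\varphi(\pi)$ to the cell index $z_x\in\Z^3$ with $x\in\mathcal C_{z_x}$, bound $\widetilde\mu[x\in\pi']$ by the $\mu$-masses of the at most $7$ neighbors of $z_x$ that $\pi$ must visit, and conclude via Cauchy--Schwarz with the same $7^2\cdot 2^d$ constant. Your extra check that $\varphi(\pi)$ is indeed an infinite simple path in $\mathcal V_\infty^u$ starting at $\Gamma(y)$ is the same observation the paper makes when defining $\varphi$.
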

\begin{proof}
In a slight abuse of notation, for $x \in \mathcal V_\infty^u \subset \Z^d$ define $\Gamma^{-1}(x)$ to be
 the $z \in \Z^3$ (unique, if it exists) such that $x \in \mathcal C_z.$ If no such $z$ exists, then let $\Gamma^{-1}(x) = \infty$ and define $|\infty-z|_1=\infty$
for $z\in\Z^3$. We will see that the latter case is of no importance,
since the construction of $\varphi$ is such that it restricts all paths on $\mathcal V_{\infty}^{u}$ to such $x$ for which $\Gamma^{-1}(x)\in\Z^3$. Then for 
$x \in \mathcal V_\infty^u,$ one has
$$
\mu \circ \varphi^{-1} [\pi \in \Pi_{\Gamma(y)}(\mathcal V_\infty^u)\, : \, x \in \pi ] \le
\sum_{z \in \Z^3 \, : \, \vert \Gamma^{-1}( x) - z \vert_1 \le 1} \mu [\pi \in \Pi_y(\mathcal G_\infty^u) \, : \, z \in \pi].
$$
Thus, an application of the Cauchy-Schwary inequality along with (\ref{eq:pitilde})
and the facts that $\vert \mathcal C_y \vert = 2^d$ and that for $x \in \Z^3$ one has
$\vert \{z \in \Z^3 \, : \, \vert z-x \vert_1 \le 1\} \vert = 7$, yields that
\begin{equation*}
\sum_{x\in\Z^d} \mu \circ \varphi^{-1} [\pi \in \Pi_{\Gamma(y)}(\mathcal V_\infty^u)\, : \, x \in \pi ]^2
\leq 2^{d} \cdot 7^2\sum_{z\in\Z^3} \mu^2[\pi \in \Pi_y(\mathcal G_\infty^u) \, : \, z \in \pi] < \infty,
\end{equation*}
from which we may conclude the claim.
\end{proof}

\section{Proof of Proposition \ref{prop:infiniteGoodness} (existence of an infinite connected component of good vertices)}
\label{S4}

In the proof of this proposition we 
exploit the fact that as $d \to \infty,$ certain averaging effects occur  which (in combination with so-called ``sprinkling'')
imply that with high probability and for slightly supercritical intensities $u,$ such hypercubes are $u$-good in the sense of Definition \ref{def:Gyu}
(a big chunk of this work is done by Theorem 4.2 in \cite{SZ11} and in Lemma \ref{lem:G} we neatly adapt this result to our purposes).
By identifying hypercubes with vertices this will lead to a dependent percolation problem on $\Z^3$. 
This is where we will take advantage of the decoupling inequality \eqref{eq:decoupl}
in order to deduce
that $*$-connected components of $u$-bad vertices are sufficiently small,
and hence an infinite connected  component of $u$-good vertices exists.

\subsection{Proof of Proposition \ref{prop:infiniteGoodness} given an auxiliary result}
\label{S4.1}
The result below provides an estimate on the size of $*$-connected components of $u$-bad vertices.
Its proof is postponed to Section \ref{S4.2}.
\begin{proposition}[$*$-connected components of $u$-bad vertices are small] 
\label{prop:badComponentsSmall}
Fix $\varepsilon \in(0,1)$. There is $d_0=d_0(\varepsilon)\in\N$ such that for all $d\geq d_0$, 
there are $C_2,C_3 > 0$ such that for all $u\leq (1-\varepsilon)u_*(d)$ and $N \in \N$
 \begin{equation*}
 \begin{aligned}
&\sup_{x \in \Z^3}  \P \big[ x \text{ is contained in a simple $*$-path 
of u-bad vertices of length at least } N \big]
\le C_2e^{-N^{C_3}}.
\end{aligned}
 \end{equation*}
\end{proposition}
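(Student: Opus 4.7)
The plan is to introduce a family of cascading events capturing the presence of long $*$-paths of $u$-bad vertices and to apply the decoupling inequality of Proposition \ref{prop:decoupl} to obtain stretched-exponential decay. The starting point is the elementary geometric observation that a self-avoiding $*$-path of length $N$ in $\Z^3$ contains $N+1$ distinct vertices and therefore has $\ell_\infty$-diameter at least $cN^{1/3}$ for an absolute constant $c>0$, since an $\ell_\infty$-ball of radius $r$ in $\Z^3$ contains at most $(2r+1)^3$ vertices. If $x$ lies on such a path, then $x$ is an endpoint of a bad $*$-sub-path of length at least $N/2$ which reaches $\ell_2$-distance at least $cN^{1/3}/2$ from $x$. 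Hence it suffices to control the probability of a bad $*$-crossing of an annulus of appropriate radius around $x$.

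Concretely, identifying $\Z^3$ with its image in $\Z^d$ as in Section \ref{S2.1}, for $x\in\Z^d$ and $L\geq 1$ I would let
\begin{equation*}
G_{x,L}:=\big\{\text{there exists a }*\text{-path in }\Z^3\text{ of $u$-bad vertices from }B_2(x,\sqrt{d}L)\cap\Z^3\text{ to }\Z^3\setminus B_2(x,2\sqrt{d}L)\big\}.
\end{equation*}
Each $G_{x,L}$ is increasing in $\mathcal{I}^u$ (adding interlacement shrinks vacant connected components and creates more $u$-bad vertices) and is $\sigma(\Psi_{x'}:x'\in B_2(x,10\sqrt{d}L))$-measurable because $u$-badness depends only on the vacant set within a uniformly bounded $\ell_\infty$-neighborhood of the vertex in question. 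Cascading with an absolute complexity $\lambda$ can be verified as follows: if $G_{x,lL}$ occurs then the witnessing $*$-path has $\ell_2$-diameter at least $\sqrt{d}lL$, and subdividing it along a sequence of markers with $\ell_2$-spacing $\sqrt{d}L$ lying on a polynomial-sized grid $\Lambda\subset B_2(x,9\sqrt{d}lL)\cap\Z^3$ produces two grid points $x',x''\in\Lambda$ at $\ell_2$-distance at least $(l/100)\sqrt{d}L$ for which $G_{x',L}\cap G_{x'',L}$ is forced. The resulting $|\Lambda|$ grows polynomially in $l$, uniformly in $d$.

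With the cascading family in place, I would next choose $l_0=l_0(d)$ large enough so that $f(l_0)\leq 1+\varepsilon/(2(1-\varepsilon))$, which yields $u_0:=u f(l_0)\leq(1-\varepsilon/2)u_*(d)$. By Lemma \ref{lem:G} (which refines Theorem 4.2 of \cite{SZ11}), for $d$ sufficiently large the single-vertex probability $\sup_y\P[y\text{ is $u_0$-bad}]$ is small enough to absorb, by a union bound, the $O(d^{3/2}L_0^3)$ candidate starting vertices of a bad $*$-crossing inside $B_2(x,2\sqrt{d}L_0)\cap\Z^3$. Taking $L_0\geq\sqrt{d}$ and $l_0\geq 10^6\sqrt{d}c_0$ so that additionally $\sup_x\P[G_{x,L_0}^{u_0}]+\varepsilon(u_0)<(C_1 l_0^{2\lambda})^{-1}$, Proposition \ref{prop:decoupl} then yields $\sup_x\P[G_{x,L_n}^{u}]\leq\rho^{2^n}$ for some $\rho\in(0,1)$. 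Choosing $n$ so that $\sqrt{d}L_n\sim cN^{1/3}/2$, one has $2^n\sim (N^{1/3})^{\log_{l_0}2}$, which translates the above into the desired bound $C_2\exp(-N^{C_3})$ with $C_3=(\log 2)/(3\log l_0)>0$.

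The main obstacle will be the base-case estimate: the polynomial combinatorial factor $d^{3/2}L_0^3$ coming from the union bound over starting vertices must be dominated by the single-vertex $u_0$-bad probability provided by Lemma \ref{lem:G}. This hinges on quantitative high-dimensional estimates stemming from the renormalization schemes of \cite{SZ11,SZ11B}, which must deliver a dimension-dependent bound robust enough to survive the averaging used here. A secondary technical difficulty is to establish the cascading property with a complexity $\lambda$ that is independent of $d$, and to ensure that the scale thresholds ($L_0\geq\sqrt{d}$, $l_0\geq 10^6\sqrt{d}c_0$, and $f(l_0)$ close to $1$) can all be satisfied simultaneously once $d$ is large enough.
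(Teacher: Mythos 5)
Your proposal is correct and follows essentially the same route as the paper: a seed estimate from Lemma \ref{lem:G} at a slightly higher level, cascading crossing events for $*$-paths of $u$-bad vertices across annuli in $\Z^3$ with dimension-independent complexity, the decoupling inequality with $L_0\sim\sqrt d$, $l_0\sim\sqrt d$ and $f(l_0)\to 1$, and finally the observation that a long simple $*$-path in $\Z^3$ must cross a macroscopic annulus. The only (immaterial) differences are cosmetic: the paper uses $\ell^\infty$-boxes instead of $\ell^2$-annuli, union bounds only over the inner boundary ($\sim d$ vertices) rather than the whole ball, sprinkles down from $\widetilde u_0=(1-\varepsilon)u_*$ rather than up from $u$, and uses the cruder exit radius $(N/3)^{1/d}$ in place of your $cN^{1/3}$.
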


Before we proceed, recall the notion of exterior boundary below (\ref{eq:boundary}).
We now prove Proposition \ref{prop:infiniteGoodness}.
\begin{proof}[Proof of Proposition \ref{prop:infiniteGoodness}]
For $x\in\Z^3$ define
\begin{align*}
\mathcal{G}_x = \left\{
 \begin{array}{ll}
 \text{the connected component of $u$-good vertices containing $x$,} & 
 \text{ if $x$ is $u$-good,}\\
 \emptyset, &\text{ otherwise.}
 \end{array}
 \right.
 \end{align*}
Now assume that there is $N\in\N$ such that $\mathcal{G}_x$ has  finite 
cardinality for all $x\in\Z^3$ with $|x|_\infty\leq N$.
We claim that 
\begin{align} \label{eq:claim}
\begin{split} 
\text{\centerline{ in this case there is a $y\in \Z^3$ with $|y|_{\infty}\geq N$,
such that $y$ is} } \\ 
\text{ \centerline{ connected to $B_{\infty}^{3}(y,|y|_{\infty})^{c}$
by a $*$-path of $u$-bad vertices.}}
\end{split}\end{align}
Let us for a moment assume that the claim is correct. Then by 
Proposition \ref{prop:badComponentsSmall} and using
a union bound in combination with the fact that the $ \vert B_\infty^3 (y,k) \vert \le 6(2k+1)^2$ elements, one has
\begin{equation}
\label{BNnotlinked}
\begin{aligned}
\P\Big[\mathcal{G}_x \mbox{ is finite for all $|x|_\infty\leq N$}\Big] 
\leq C_2'\sum_{k=N}^{\infty} k^2e^{-k^{C_3}},
\end{aligned}
\end{equation}
which is smaller than one if $N$ is large enough.
Consequently there is, with positive $\P$-probability, 
an infinite connected component in $\mathcal{G}^{u}$.
Since the existence of an infinite connected component in $\mathcal{G}^{u}$ is an event that is
invariant under shifts in
$\Z^3$, and since $\P$ is ergodic with respect to these shifts (see \cite[Theorem 2.1]{SZ10}),
we obtain that, $\P$-a.s. there is an infinite connected component
 in $\mathcal G^u$.\\
We now prove \eqref{eq:claim}. 
If $\partial_{\mathrm{ext}}\mathcal G_x = \emptyset$ for all $x \in B_\infty^3(0,N),$ then due
to the finiteness assumption on the $\mathcal G_x$
we get  $\mathcal G_x = \emptyset$ for all such  $x,$ and hence all such $x$ are $u$-bad,
which would yield the claim.
Therefore, assume otherwise, and let $y'\in
\partial_{\mathrm{ext}}\mathcal G_x,$ with $|x|_{\infty}\leq N,$
be such that it has
maximal first coordinate among all such vertices $y'$ fulfilling $y_2', y_3' \in [-N,N]$
(where $y_i'$, $i\in\{1,2,3\}$, denotes the $i$-th
coordinate of $y'$).
Fix $x$ such that 
$y' \in \partial_{\mathrm{ext}}\mathcal G_{x}$ and denote it by $x^{(0)}.$
We aim to find a $u$-bad $z\in\Z^3$, such that $|z-y'|_{\infty}> |y'|_{\infty}$
and such that there is a $*$-path of $u$-bad vertices which connects $y'$ to $z$.
For this purpose, we distinguish two cases:

{\em (i)}$\;$ If $|y'|_{\infty}\leq N$, then observe that as a consequence of the definition of  $y',$
all vertices in $\partial_{\mathrm{int}}B_{\infty}^{3}(0,N) \cap (\{N\} \times \Z^2)$ are $u$-bad.
Hence, one can immediately choose $y,z \in \partial_{\mathrm{int}}B_{\infty}^{3}(0,N) \cap (\{N\} \times \Z^2)$
fulfilling the required properties.

{\em (ii)}$\;$ Assume now that $|y'|_{\infty}>N.$
Then we have $|y'|_{\infty}= y'_1>N,$
and we set $y^{(0)} := y := y'$ and $x^{(0)} := x.$ 
Define a nearest neighbor path via
$\Phi(n)= (y^{(0)}_1-n,y^{(0)}_2,y^{(0)}_3)$ for $n \ge 0.$ 
In addition
let $z^{(0)}\in\partial \mathcal{G}_{x^{(0)}}$ be 
such that there is no vertex in ${\rm range}(\Phi) \cap \partial \mathcal G_{x^{(0)}}$ 
which has a smaller first coordinate than $z^{(0)},$ 
and define $m_0$ via $\Phi(m_0) := z^{(0)}.$
In particular, by definition we have $z^{(0)} \in\partial_{\mathrm{ext}}\mathcal{G}_{x^{(0)}}.$
In addition, let
$$
n_0=\max\{n\ge m_0 \, :\, \Phi(m) \mbox{ is $u$-bad for all }m_0 \le m \le n \} \wedge 2y_1^{(0)},
$$
and set $y^{(1)} = \Phi(n_0).$
By Tim\'{a}r \cite[Lemma 2]{T13}, the set $\partial_{\mathrm{ext}} \mathcal{G}_{x^{(0)}}$ 
is $*$-connected.
Now if  $y^{(1)}_1 < 0$, then this $*$-connectivity of $\partial_{\mathrm{ext}}\mathcal{G}_{x^{(0)}}$
is enough to deduce the claim.
In fact, in this case we may connect $y^{(0)}$ to $y^{(1)}$ via a
$*$-path of $u$-bad vertices of length more than $\vert y^{(0)} \vert,$ 
by first connecting $y^{(0)}$ to $z^{(0)}$ via a $*$-path contained in $\partial_{\mathrm{ext}} \mathcal{G}_{x^{(0)}}$ and 
by then connecting  $z^{(0)}$ to $y^{(1)}$ along $\Phi$; this would finish the proof.
If, on the other hand, $y_1^{(1)}\ge 0$, then observe that $y^{(1)}\in\partial_{\mathrm{ext}}\mathcal{G}_{\Phi(n_0+1)}$
(to see this,
use that $y^{(1)} \in \partial \mathcal{G}_{\Phi(n_0+1)}$
and that it is connected to $y^{(0)}$ along a $*$-path of $u$-bad vertices, and that $y^{(0)}$ has maximal first coordinate
among all elements 
$z \in \partial_{\mathrm{ext}} \mathcal{G}_{x},$ for some $|x|_{\infty}\leq N,$ 
and such that $z_2,z_3 \in [-N,N]$).
We can now repeat the procedure started in $(ii)$ with $y^{(1)}$ taking the role of $y^{(0)}$ 
in order to obtain 
a $y^{(2)}$ taking the role of the previous $y^{(1)},$ and so on.
I.e., we construct a sequence $y^{(0)},y^{(1)},\ldots, y^{(n)}$ (up to the smallest $n \in \N$ 
such that $y^{(n)}_1 < 0$)
such that $y^{(k)}$
may be connected to $y^{(k+1)}$ by a $*$-path of $u$-bad vertices
for all $k \in \{0, 1, \ldots, n-1\}.$ In particular, since $y^{(k)}_1\leq y^{(k-1)}_1-2$ for all $k\leq n,$
after at most $|y'|_{\infty}/2+1$ iterations (and taking the loop-erasure of the path connecting
$y^{(0)}$ to $z:=y^{(n)}$) we will have found the desired $z\in\Z^3$,
which finally yields the claim.

\end{proof}

\subsection{Proof of Proposition \ref{prop:badComponentsSmall}}
\label{S4.2}
The proof will be divided into several lemmas. For this purpose
 fix $\varepsilon\in(0,1)$
and define 
\begin{equation} \label{eq:uTildeDef}
\widetilde{u}_0=(1-\varepsilon)u_*(d).
\end{equation}
The following estimate will serve as a seed estimate for the decoupling inquality
of Proposition \ref{prop:decoupl} and as such be employed
in Lemma \ref{lem:decoupl}.
\begin{lemma}
\label{lem:G}
There is $d_0\in\N$ such that 
for all $y\in\Z^3$,
\begin{equation*}
\P\Big[\mathcal{G}_{y,\widetilde{u}_0}^{c}\Big]\leq d^{-7}/5 \qquad \mbox{for all } d\geq d_0,\, d\in\N,
\end{equation*}
where $\mathcal{G}_{y,\widetilde{u}_0}$ was defined in \eqref{eq:Gyu}.
\end{lemma}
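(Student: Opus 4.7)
The plan is to derive Lemma~\ref{lem:G} essentially from Theorem~4.2 of Sznitman~\cite{SZ11}, which is precisely tailored to providing connectivity estimates for the vacant set inside a single dyadic hypercube of side~$1$ in the high-dimensional lattice at intensities below $(1-\varepsilon)u_*(d)$. That theorem gives, for a fixed hypercube $\mathcal C_z$, the existence with super-polynomially high probability in~$d$ of a giant connected component of $\mathcal V^u\cap\mathcal C_z$ whose closure covers a $1-d^{-2}$ fraction of $\mathcal C_z$, together with the gluing estimates needed to connect giants in adjacent hypercubes. Our Definition~\ref{def:Gyu} merely packages such single-cube statements into a seven-cube statement, where the seven cubes correspond to a central point $y\in\Z^3$ and its six $\ell^1$-neighbors.

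First, I would fix $y\in\Z^3$ and apply Theorem~4.2 of~\cite{SZ11} separately to each of the seven hypercubes $\mathcal C_z$ with $|y-z|_1\le 1$, using the common intensity $\widetilde u_0-\eta$ for a small auxiliary $\eta>0$ that leaves room for sprinkling in the next step. This yields, for each such $z$, the existence with probability at least $1-\delta(d)$ of a (unique, by Remark~\ref{rm:unqiue}) component $\mathfrak C_{y,z}$ of $\mathcal V^{\widetilde u_0-\eta}\cap\mathcal C_z$ satisfying $|\overline{\mathfrak C_{y,z}}\cap\mathcal C_z|\ge(1-d^{-2})|\mathcal C_z|$, where $\delta(d)$ decays faster than any polynomial in $d^{-1}$. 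A union bound over the seven centers contributes at most $7\delta(d)$ to $\P[\mathcal G_{y,\widetilde u_0}^c]$.

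Second, I would verify the mutual connectivity of the adjacent giants inside $\mathcal V^{\widetilde u_0}$. Since $\mathfrak C_{y,z}$ and $\mathfrak C_{y,z'}$ for $|z-z'|_1=1$ each fill a $1-d^{-2}$ fraction of their respective hypercubes in the closure sense, all but an $O(d^{-2})$ fraction of the $2^{d-1}$ edges bridging $\mathcal C_z$ and $\mathcal C_{z'}$ already have both endpoints in the two giants' closures. Sprinkling back the $\eta$-mass and invoking the interface-gluing part of Sznitman's estimates shows that at least one such interface edge becomes fully vacant with failure probability $\delta'(d)$, again super-polynomially small in~$d$. Summing the six adjacency contributions with the previous seven then gives $\P[\mathcal G_{y,\widetilde u_0}^c]\le 13\max\{\delta(d),\delta'(d)\}$, which is bounded above by $d^{-7}/5$ for all $d\ge d_0(\varepsilon)$.

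The main non-trivial point is not any of the individual estimates but rather the adaptation of Theorem~4.2 of~\cite{SZ11}, whose statement naturally concerns a single cube, into a joint statement about seven mutually connected cubes while maintaining both the same effective intensity $\widetilde u_0=(1-\varepsilon)u_*(d)$ and a super-polynomial $d$-dependence in the bound; this is the ``neat adaptation'' advertised in the introduction to Section~\ref{S4}. The slack factor $1/5$ in the target $d^{-7}/5$ is precisely what absorbs the $13$-term union bound together with the $d$-dependent prefactors hidden inside Sznitman's estimates, which is why a soft target like $d^{-7}/5$ (rather than the sharper super-polynomial bound actually available) suffices and is chosen here.
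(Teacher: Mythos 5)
There is a genuine gap in your gluing step, and it stems from the direction of monotonicity of the vacant set. You propose to apply Sznitman's estimate at the lower intensity $\widetilde u_0-\eta$ and then ``sprinkle back the $\eta$-mass'' to make interface edges vacant at level $\widetilde u_0$. But increasing the intensity from $\widetilde u_0-\eta$ to $\widetilde u_0$ only adds trajectories, so $\mathcal V^{\widetilde u_0}\subset\mathcal V^{\widetilde u_0-\eta}$: sprinkling upward can never create vacant sites, it can only destroy them, and the giants you produced at level $\widetilde u_0-\eta$ need not even survive inside $\mathcal V^{\widetilde u_0}$. A second, related problem is that your counting argument conflates the closure $\overline{\mathfrak C_{y,z}}$ with the component itself: a bridging edge whose endpoints lie in the two closures need not connect the giants inside the vacant set, since points of $\overline{\mathfrak C}\setminus\mathfrak C$ are occupied. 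So the ``interface-gluing'' content you attribute to Theorem 4.2 of \cite{SZ11} is exactly the nontrivial part, and your proposal does not supply it; moreover the error bound available there is of polynomial type (of order $d^{-7}$), not super-polynomial as you assert, so the slack in $d^{-7}/5$ is not as generous as you suggest.

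The paper's actual argument avoids any per-cube-plus-gluing decomposition and any sprinkling. Theorem 4.2 of \cite{SZ11} is used as a statement about a \emph{planar} good event: one defines $\mathcal G^2_{y,u}$ and $\widetilde{\mathcal G}^2_{y,u}$ exactly as in \eqref{eq:Gyu} but with $\Z^3$ replaced by the coordinate planes $\Z^2\times\{0\}^{d-2}$ and $\{x: x_i=0,\ i\notin\{2,3\}\}$, respectively. By the uniqueness of the giant component in each hypercube (Remark \ref{rm:unqiue}), the two planar events share the giants in the common cubes, so $\mathcal G^2_{0,\widetilde u_0}\cap\widetilde{\mathcal G}^2_{0,\widetilde u_0}\subseteq\mathcal G_{0,\widetilde u_0}$: the seven giants required in three dimensions are automatically mutually connected through the cubes the two planes share. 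Theorem 4.2 (applicable at intensity $\widetilde u_0$ because $u_*(d)\le(1+\varepsilon)\log d$ by \cite{SZ11B} and $g(0)\to1$) gives each planar event probability at least $1-d^{-7}/10$, and a two-term union bound yields $d^{-7}/5$; shift invariance extends this to all $y$. If you want to salvage a route closer to yours, you would have to extract from \cite{SZ11} connectivity estimates between adjacent giants at the \emph{same} intensity $\widetilde u_0$, rather than manufacture them by sprinkling.
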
 
\begin{proof}
We will derive the result using Theorem 4.2 of \cite{SZ11}.
For this purpose identify $\Z^2$ with $\Z^2\times\{0\}^{d-2}$ and set
\begin{equation*}
\widetilde{\Z}^2 = \big\{x\in\Z^d\, \colon\, 
x_i=0 \mbox{ for all }i\notin\{2,3\}\big\}.
\end{equation*}
Furthermore, define $\mathcal{G}_{y,{u}}^{2}$ and 
$\widetilde{\mathcal{G}}_{y,u}^{2}$, respectively,
by $\mathcal{G}_{y,u}$ as in \eqref{eq:Gyu},
but with $\Z^3$ replaced by $\Z^2$ and  $\widetilde{\Z}^2$, respectively.
By Remark \ref{rm:unqiue}, there is $d_0\in\N$ such that for $d\geq d_0$,
the hypercube $\mathcal C$ contains at most one connected component $\mathfrak{C}$
with $|\overline{\mathfrak{C}}\cap \mathcal C|\geq (1-d^{-2})|\mathcal C|$.
As a consequence we deduce
\begin{equation}
\label{intersectG}
\mathcal{G}_{0,\widetilde{u}_0}^{2}\cap\widetilde{\mathcal{G}}_{0,\widetilde{u}_0}^{2}
\subseteq \mathcal{G}_{0,\widetilde{u}_0}.
\end{equation}
Finally, it remains to apply Theorem 4.2 in \cite{SZ11}. Note that the intensity parameter in that result
equals $(1-\varepsilon)g(0) \log d$, where $g(0)$ was the Green function at the origin; however
since the main result of \cite{SZ11B} supplies us with $u_*(d) \le (1+\varepsilon)\log d$
for $\varepsilon > 0$ arbitrary $d$ large enough,
and since $g(0) \to 1$ as $d \to \infty$ (see e.g. Lemma 1.2 in \cite{SZ11}),
we can apply it with intensity $\widetilde u_0$ also, if $d$ large enough. Hence, we infer that for $d\geq d_0$
\begin{equation*}
\P\Big[\mathcal{G}_{0,\widetilde{u}_0}^{2}\Big] \geq 1-d^{-7}/10.
\end{equation*}
As the same is true for $\widetilde{\mathcal{G}}_{0,\widetilde{u}_0}^{2}$, in combination with (\ref{intersectG})  we obtain the claim for $y=0$.
Since $\P$ is invariant under shifts in space, we obtain the result for every $y\in\Z^3$.
\end{proof}

\medskip\noindent
We define for $x\in\Z^3$ and $L\geq 1$, $L$ integer,
\begin{equation*}
\begin{aligned}
 A_{x,L}=
\Big\{&\Psi\in\{0,1\}^{\Z^d}\colon\, B_{\infty}^3(x,L) \mbox{ is connected to } 
\partial_{\mathrm{int}} B_{\infty}^3(x,2L)\\
&\mbox{ by a $*$-path on $\Z^3$ along which $\Psi$ equals one}\Big\}.
\end{aligned}
\end{equation*}

If $x\notin \Z^3$, then $A_{x,L}=\emptyset$.
We will denote ``bad'' crossing events by
\begin{equation*}
\begin{aligned}
B^u_{x,L}&=
\Big\{\omega \in \Omega\colon\, \one_{\mathcal{B}^{u}(\omega)}\in A_{x,L} \Big\}\\
&=\Big\{\omega \in \Omega\colon\, B_{\infty}^3(x,L) \mbox{ is connected to 
$B_{\infty}^3(x,2L)$ by a $*$-path on $\Z^3$ of $u$-bad vertices}\Big\},
\end{aligned}
\end{equation*}
where we recall that $\mathcal B^u$ had been defined in 
\eqref{eq:BG}. Also, recall Definition \ref{def:cascade} of 
cascading events.
\begin{lemma}
\label{lem:cascade}
$\mathcal{A} = (A_{x,L})_{x\in\Z^d, L\geq 1 \mbox{ integer}}$ is a family of increasing events 
which cascades with complexity at most $3$.
Moreover $C_1=C_1(\mathcal{A},3)$ as introduced in Definition \ref{def:cascade}
does not depend on $d$.
\end{lemma}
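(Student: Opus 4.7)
The lemma asserts three things: each $A_{x,L}$ is increasing, is measurable with respect to $\sigma(\Psi_{x'}:x'\in B_2(x,10\sqrt{d}L))$, and the family cascades with complexity at most $3$ with a $d$-independent constant $C_1$. Monotonicity is immediate since a pointwise increase of $\Psi$ preserves any $*$-path of ones witnessing $A_{x,L}$. For measurability, $A_{x,L}$ depends only on $\Psi$ restricted to $\Z^3\cap B_\infty^3(x,2L)$; when $x\in\Z^3$, every $y$ in that set satisfies $|y-x|_2\leq 2\sqrt{3}L\leq 10\sqrt{d}L$ for $d\geq 3$, while when $x\notin\Z^3$ the event is empty.

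The substance is the cascading property. If $x\notin\Z^3$ I would set $\Lambda=\emptyset$, making the inclusion vacuous. Otherwise, I would take $\Lambda$ to be a lattice in $\Z^3$ of mesh $L$ contained in $B_\infty^3(x,2lL)$, so that $|\Lambda|\leq(4l+1)^3\leq C_1 l^3$ with an absolute $C_1$, and $\Lambda\subset B_2(x,2\sqrt{3}lL)\subset B_2(x,9\sqrt{d}lL)$ for $d\geq 3$.

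For the inclusion itself, suppose a configuration lies in $A_{x,lL}$ and fix a witnessing $*$-path $\gamma$ of $\Psi$-ones connecting some $y_1\in B_\infty^3(x,lL)$ with some $y_2\in\partial_{\mathrm{int}}B_\infty^3(x,2lL)$. Pick $x',x''\in\Lambda$ to be respective nearest grid points to $y_1$ and $y_2$; then $y_1\in B_\infty^3(x',L)$, $y_2\in B_\infty^3(x'',L)$, and since $|y_1-y_2|_\infty\geq lL\gg 2L$, the portion of $\gamma$ inside $B_\infty^3(x',2L)$ is forced to exit that box. This produces a $*$-path of $\Psi$-ones from $B_\infty^3(x',L)$ to $\partial_{\mathrm{int}}B_\infty^3(x',2L)$, i.e., $A_{x',L}$ occurs, and symmetrically so does $A_{x'',L}$.

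The hard part will be verifying the separation requirement $|x'-x''|_2\geq\tfrac{l}{100}\sqrt{d}L$ with a $d$-independent $C_1$. The natural bound from the geometry reads $|x'-x''|_2\geq|y_1-y_2|_\infty-2L\geq lL/2$, which meets the threshold precisely when $l\geq 50\sqrt{d}$; this is the regime in which the cascading is applied via Proposition~\ref{prop:decoupl}, whose hypothesis enforces $l_0\geq 10^6\sqrt{d}c_0$. Since every combinatorial quantity entering the construction of $\Lambda$ is governed purely by three-dimensional counting, the bound $|\Lambda|\leq C_1l^3$ holds with $C_1$ independent of $d$, which yields the second assertion of the lemma.
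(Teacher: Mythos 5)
Most of your proposal is sound and is essentially the argument behind (3.10) of \cite{SZ12}, to which the paper's own (omitted) proof defers: monotonicity is immediate, measurability follows from $|\cdot|_2\le\sqrt{d}\,|\cdot|_\infty$, a mesh-$L$ grid in $\Z^3$ gives $|\Lambda|\le C_1l^3$ with $C_1$ independent of $d$, and a crossing at scale $lL$ does force two crossings at scale $L$ near its two endpoints. The genuine gap is exactly where you flag it, and your proposed fix does not work. The inequality $\tfrac{lL}{2}\ge\tfrac{l}{100}\sqrt{d}L$ is equivalent to $\sqrt{d}\le 50$: both sides are linear in $l$, so no largeness of $l$ (in particular not $l\ge 50\sqrt d$, nor $l_0\ge 10^6\sqrt d c_0$) helps; you have in effect dropped the factor $l$ on the right-hand side. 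Moreover the obstruction is intrinsic to the situation, not to your choice of $\Lambda$: for a configuration in $A_{x,lL}$ whose $1$'s are exactly a minimal crossing path, any $x'$ with $A_{x',L}$ occurring must lie within $\ell^\infty$-distance about $2lL+2L$ of $x$ in $\Z^3$, so any admissible pair satisfies $|x'-x''|_2\le \sqrt{3}\,(4lL+4L)$, which falls below the required $\tfrac{l}{100}\sqrt{d}L$ once $\sqrt d$ exceeds a fixed constant of order $10^3$. Thus no selection of $x',x''$ along your lines can verify the inclusion in Definition \ref{def:cascade} with the $\sqrt d$-separation for large $d$ --- and large $d$ is precisely the regime of the paper.

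Separately, even if restricting attention to large $l$ did rescue the inequality, it would not prove the lemma as stated: Definition \ref{def:cascade} requires the cascading inclusion for \emph{every} multiple $l$ of $100$, not only for the value $l=l_0$ used when Proposition \ref{prop:decoupl} is invoked, so your argument would at best yield a weaker ``cascading for large $l$'' property, and you would then still owe the verification that this restricted property suffices in the proof of the decoupling inequality (where the cascading step is applied with $l=l_0$ at each scale). Note that the paper's hint --- use $|\cdot|_2\le\sqrt d\,|\cdot|_\infty$ --- only serves the measurability and containment conditions ($B^3_\infty(x,2L)\subset B_2(x,10\sqrt d L)$, $\Lambda\subset B_2(x,9\sqrt d lL)$), not the lower bound on $|x'-x''|_2$; so the difficulty you ran into is a real one, and to close it you must either work with the separation requirement without the $\sqrt d$ factor (adjusting the constants in the appendix accordingly, which the slack $l_0\ge 10^6\sqrt d c_0$ permits) or explicitly formulate and use the restricted cascading statement. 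As written, the sentence ``this is the regime in which the cascading is applied'' is not a proof of the stated lemma.
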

\begin{proof} 
The proof is similar to the proof of 
(3.10) in \cite{SZ12}, except that one additionally has to make use of the fact
that $|\cdot|_2\leq \sqrt{d}|\cdot|_\infty$. We omit the details.\\
\end{proof}
The family of events $(B^u_{x,L})_{x\in\Z^d, L\geq 1 \mbox{ integer}}$ is shift invariant in the following sense: 
Let 
\begin{equation*}
\omega =  \sum_{i\geq 0} \delta_{(w_i^*,u_i)} \in \Omega,
\end{equation*}
and define
\begin{equation*}
\tau_x:\Omega\mapsto \Omega, \quad
\omega \mapsto \sum_{i\geq 0} \delta_{(w_i^*+x,u_i)},
\end{equation*}
where $w^* + x = \pi(w(\cdot)+x)$, any $w\in \pi^{-1}(w^*)$.
Then for all $x,y\in\Z^3$ one has
\begin{equation}
\label{eq:Ashiftinv}
\omega \in B_{x,L} \quad \mbox{if and only if}\quad \tau_y(\omega) \in B_{x+y,L}.
\end{equation}
We are now in the position to apply the decoupling inequality (\ref{eq:decoupl}).
For this purpose $l_0$ and $L_0$ are such that they satisfy the relations
\begin{equation}
\label{eq:fixl0L0}
l_0\geq 10^6\sqrt{d}c_0 \qquad \mbox{and} \qquad L_0=\lceil \sqrt{d} \rceil.
\end{equation}
We further recall the definition of $u_{\infty}^{-}$,
see the lines following (\ref{eq:fdecoupl}), as well as the definition of $\widetilde u_0,$ in \eqref{eq:uTildeDef}.
\begin{lemma}
\label{lem:decoupl}
 There is $d_0\in\N$ such that for all $d\geq d_0$, $d\in\N$, 
there is $l_0$ satisfying \eqref{eq:fixl0L0} such that for all $u\leq u_{\infty}^{-}=u_{\infty}^{-}(\widetilde{u}_0),$
one has
\begin{equation}
\label{decouplA}
\P\big[B^u_{0,L_n}\big]\leq e^{-2^n}.
\end{equation}
\end{lemma}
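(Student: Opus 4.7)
The strategy is to feed Lemma \ref{lem:G} as a seed estimate into the decoupling inequality of Proposition \ref{prop:decoupl} applied to the cascading family $\mathcal A$ identified in Lemma \ref{lem:cascade}. Before doing so, I would first note that by the standard monotone coupling of random interlacements in $u$, the set $\mathcal B^u$ of $u$-bad vertices is non-decreasing in $u$, and therefore so is $\P[B^u_{0, L_n}]$. Consequently it suffices to prove \eqref{decouplA} for $u = u_\infty^-(\widetilde u_0)$. The events $B^u$ are not literally of the form $A^u$ in Proposition \ref{prop:decoupl}, but since $\one_{\mathcal B^u(\omega)}(y)$ is a local and monotone increasing function of $\one_{\mathcal I^u(\omega)}$ (goodness depends only on the interlacement in a bounded neighborhood of $y$), we may replace $A_{x,L}$ by the pullback family in $\{0,1\}^{\Z^d}$; this family is increasing, still cascades with complexity at most $3$, and retains the $d$-independent cascading constant $C_1$ guaranteed by Lemma \ref{lem:cascade}.

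With this in place, Proposition \ref{prop:decoupl} (with $\lambda=3$, $L_0 = \lceil \sqrt d \rceil$, and $u_0 = \widetilde u_0$) yields
\begin{equation*}
\P\big[B^{u_\infty^-}_{0, L_n}\big]
\leq \big(C_1 l_0^{6}\big)^{2^n}
\Big(\P\big[B^{\widetilde u_0}_{0, L_0}\big] + \varepsilon(u_\infty^-)\Big)^{2^n}.
\end{equation*}
For the seed, any realization in $B^{\widetilde u_0}_{0, L_0}$ contains at least one $\widetilde u_0$-bad vertex in $B^3_\infty(0, 2L_0)$, so the shift-invariance of $\P$ and Lemma \ref{lem:G} give
\begin{equation*}
\P\big[B^{\widetilde u_0}_{0, L_0}\big]
\leq (4L_0 + 1)^3 \cdot \frac{d^{-7}}{5}
\leq C d^{-11/2}.
\end{equation*}
The sprinkling error $\varepsilon(u_\infty^-)$ is negligible: $L_0^{d-2}$ grows super-exponentially in $d$, while $u_\infty^- = \widetilde u_0 / f(l_0)$ remains bounded below once one checks (using $l_0 \geq 10^6 c_0 \sqrt d$) that the product in \eqref{eq:fdecoupl} satisfies $c_1^d l_0^{-(d-3)/2} \to 0$, so $f(l_0) \to 1$ and $u_\infty^- \to \widetilde u_0 \sim (1-\varepsilon)\log d$; hence $\varepsilon(u_\infty^-)$ is at most doubly exponentially small in $d$.

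To conclude, choose $l_0 = \lceil 10^6 c_0 \sqrt d \rceil$, which satisfies \eqref{eq:fixl0L0} and gives $l_0^6 \leq C' d^3$. Then
\begin{equation*}
C_1 l_0^6 \cdot \Big(\P\big[B^{\widetilde u_0}_{0, L_0}\big] + \varepsilon(u_\infty^-)\Big)
\leq C'' d^{-5/2} \leq e^{-1}
\end{equation*}
for all $d$ larger than some $d_0$, which upon substitution yields \eqref{decouplA}. The principal obstacle is precisely the requirement that the prefactor $C_1$ and the cascading complexity $\lambda$ be dimension-free: if $C_1$ or $\lambda$ were allowed to grow with $d$, the polynomial blow-up of $(C_1 l_0^6)^{2^n}$ could overwhelm the polynomial decay $d^{-11/2}$ of the seed probability. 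The choices $L_0 = \lceil \sqrt d \rceil$ (tuning Lemma \ref{lem:G}) and the $\ell^2$-based formulation of cascading in Definition \ref{def:cascade} (cf.\ Remark \ref{rm:cascade}) are exactly what keep these constants dimension-free and make the argument close.
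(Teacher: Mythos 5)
Your proof is correct and follows essentially the same route as the paper: the seed bound from Lemma \ref{lem:G} via a union bound, Proposition \ref{prop:decoupl} with $\lambda=3$, $L_0=\lceil\sqrt d\rceil$ and $l_0\asymp\sqrt d$, negligibility of $\varepsilon(u_\infty^-)$ from $u_\infty^-$ staying close to $\widetilde u_0$ while $L_0^{d-2}$ is super-exponential, and monotonicity in $u$ to cover all $u\le u_\infty^-$. The only (harmless) deviation is that you union-bound the seed over all of $B^3_\infty(0,2L_0)$ (giving $d^{-11/2}$) where the paper uses only $\partial^*_{\mathrm{int}}B^3_\infty(0,L_0)$ (giving $d^{-6}$); both beat the $C_1l_0^6\asymp d^3$ prefactor.
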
 
\begin{proof}
By Proposition \ref{prop:decoupl}, Lemmas \ref{lem:G}--\ref{lem:cascade},
(\ref{eq:Ashiftinv}) 
and the fact that $\P$ 
is invariant under shifts in $\Z^3$, we get
\begin{equation}
\label{eq:Bestimate}
\begin{aligned}
\P\Big[B_{0,L_n}^{u_{\infty}^{-}}\Big] 
&\leq \Big(C_1 l_0^6\Big)^{2^n}\Big(\P\big[B_{0,L_0}^{\widetilde{u}_0}\big] 
+ \varepsilon(u_{\infty}^{-})\Big)^{2^n}.
\end{aligned}
\end{equation}
To estimate the probability on the right-hand side of \eqref{eq:Bestimate}, note that
\begin{equation*}
\begin{aligned}
\P\big[B_{0,L_0}^{\widetilde{u}_0}\big]
\leq \P\big[\text{there is $x\in\partial_{\mathrm{int}}^{*}B_{\infty}^{3}(0,L_0)$
which is $\widetilde{u}_0$-bad}\big]
\leq cd\P\Big[\mathcal{G}_{y,\widetilde{u}_0}^{c}\Big]
\leq cd^{-6},
\end{aligned}
\end{equation*}
where we used a union bound in combination with the fact that there is a constant
$c>0$ such that the cardinality of $\partial_{\mathrm{int}}^{*}B_{\infty}^{3}(0,L_0)$ is bounded by $cd$ to get the second inequality. The last inequality is a consequence of Lemma \ref{lem:G}.

Hence, in order to prove the desired decay of the right-hand side, it is enough to
determine $l_0$ such that
\begin{equation}
\label{L0l0}
\begin{aligned}
 cC_1l_{0}^6d^{-6}\leq \frac{1}{2e}\qquad \mbox{ and}
\qquad C_1l_0^6\varepsilon(u_{\infty}^{-})\leq \frac{1}{2e}.
\end{aligned}
\end{equation}
The first inequality in (\ref{L0l0}) is indeed satisfied for all $d$ large enough, subject to the choice of $l_0$ in \eqref{eq:fixl0L0}.
To show the second inequality in (\ref{L0l0}), observe that
\begin{equation*}
\lim_{d\to\infty}\frac{c_1^d}{l_0^{(d-3)/2}} = 0,
\qquad \mbox{for }  l_0\geq 10^6\sqrt{d}c_0.
\end{equation*}
Employing this equality in the definition of $u_{\infty}^-$ in
(\ref{eq:fdecoupl}), we obtain that  $u_{\infty}^{-}\geq (1-2\varepsilon)u_*(d)$, if $d$ large enough.
Using this inequality and the fact that by the main result of \cite{SZ11} one has that for $d$ large enough $u_*(d)\geq (1-\varepsilon)\log d$, the definition of $\varepsilon(u_{\infty}^{-})$ leads to the desired estimate.
This shows that (\ref{decouplA}) is true for $u=u_{\infty}^{-}$. 
The claim for every other $u\leq u_{\infty}^{-}$ follows by the fact that $B_{x,L}^u$
is increasing
in $u.$
\end{proof}
As a direct consequence of this result we obtain the following corollary.
\begin{corollary}
\label{cor:bad}
If \eqref{decouplA} holds true, then for some $C=C(d)< \infty$, 
all $u\leq u_{\infty}^{-}$ and $N\geq 1,$
\begin{equation*}
\begin{aligned}
&\P\Big[\mbox{There is a $*$-path of $u$-bad vertices
from the origin to $\partial_{\mathrm{int}} B_{\infty}^3(0,N)$}\Big]
 \leq Ce^{-N^{\frac{1}{C}}}.
\end{aligned}
\end{equation*}
\end{corollary}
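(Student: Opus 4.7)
The plan is a direct reduction to the decoupling estimate \eqref{decouplA}. Fix $d$ large enough so that Lemma \ref{lem:decoupl} applies, and let $l_0$, $L_0$ be the values chosen there. For a given $N \geq 1$ I would choose $n = n(N)$ to be the largest non-negative integer with $2L_n \leq N$, i.e.\ $n \approx \lfloor \log(N/(2L_0))/\log l_0 \rfloor$, assuming $N \geq 2L_0$ (the case $N < 2L_0$ will be absorbed into the prefactor $C$ at the end by choosing $C$ large).

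First I would observe that on the event in the corollary there exists a simple $*$-path of $u$-bad vertices starting at the origin and reaching $\partial_{\mathrm{int}} B_\infty^3(0,N)$. Since $0 \in B_\infty^3(0,L_n)$ and $2L_n \leq N$, any such path must cross the annulus $B_\infty^3(0,2L_n) \setminus B_\infty^3(0,L_n)$ before exiting $B_\infty^3(0,N)$. Extracting the portion of the path from its first visit to $B_\infty^3(0,L_n)$ until its first visit to $\partial_{\mathrm{int}} B_\infty^3(0,2L_n)$ (which, since $0$ already lies in $B_\infty^3(0,L_n)$, is just an initial segment) yields a $*$-path of $u$-bad vertices witnessing the event $B^u_{0,L_n}$. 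Hence the event in the corollary is contained in $B^u_{0,L_n}$.

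Next I would apply Lemma \ref{lem:decoupl}, which (for $u \leq u_\infty^-$) gives
\begin{equation*}
\P\bigl[\text{$*$-path of $u$-bad vertices from }0\text{ to }\partial_{\mathrm{int}} B_\infty^3(0,N)\bigr] \leq \P\bigl[B^u_{0,L_n}\bigr] \leq e^{-2^n}.
\end{equation*}
By the choice of $n$, one has $L_n = l_0^n L_0 \geq N/(2l_0)$, so $n \geq \log(N/(2l_0 L_0))/\log l_0$, which yields $2^n \geq c_0'(d)\, N^{\alpha}$ with $\alpha = (\log 2)/(\log l_0) > 0$ and some $c_0'(d)>0$. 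Setting $1/C := \alpha$ and absorbing $c_0'(d)$, the bound $e^{-2^n} \leq e^{-N^{1/C}}$ holds for all $N$ exceeding some $N_0(d)$.

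Finally, for $1 \leq N \leq N_0(d)$ the trivial bound $\P[\,\cdot\,]\leq 1$ together with adjusting the multiplicative constant in front allows one to write the inequality as $Ce^{-N^{1/C}}$ uniformly in $N \geq 1$, with $C = C(d)$ depending on $d$ through $l_0$, $L_0$, and $N_0(d)$. There is no genuine obstacle here; the only point requiring a moment of care is the passage from $2^n$ to a polynomial power $N^{1/C}$, which is forced by the geometric spacing $L_n = l_0^n L_0$ of the scales in the decoupling inequality — this is precisely why the stretched-exponential (rather than exponential) decay appears in the statement.
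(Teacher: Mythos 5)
Your argument is correct and is exactly the intended one: the paper states the corollary as a direct consequence of Lemma \ref{lem:decoupl}, namely choosing the largest $n$ with $2L_n\le N$, noting the crossing event to $\partial_{\mathrm{int}}B_\infty^3(0,N)$ forces the event $B^u_{0,L_n}$, and converting $e^{-2^n}$ into stretched-exponential decay in $N$ via $L_n=l_0^nL_0$. One cosmetic point: since $2^n\ge c_0'(d)N^\alpha$ with $c_0'(d)<1$, you should take $1/C$ strictly smaller than $\alpha=\log 2/\log l_0$ (rather than $1/C:=\alpha$) so that $c_0'(d)N^\alpha\ge N^{1/C}$ for large $N$, and then absorb small $N$ into the prefactor as you do.
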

Using this corollary, we can now prove Proposition \ref{prop:badComponentsSmall}.
\begin{proof}[Proof of Proposition \ref{prop:badComponentsSmall}]
For all $l_0$ subject to \eqref{eq:fixl0L0} using similar arguments as in the proof of Lemma \ref{lem:cascade} we see that there is $d_0$
such that for all $d\geq d_0$ one has $u_{\infty}^{-} \geq (1-2\varepsilon)u_{*}(d)$.
Fix $u\leq u_{\infty}^{-}$.
Due to the shift-invariance of $\P$ it is enough to prove the result for $x=0$.
Assume that $0$ is in a $*$-connected component of $u$-bad vertices of length
at least $N$. Consequently,  there is a $*$-path of $u$-bad vertices
from $0$ to $\partial_{\mathrm{int}} B_{\infty}^3(0,(N/3)^{1/d})$ in $\Z^3$. 
Thus, by Corollary \ref{cor:bad},
\begin{equation*}
\begin{aligned}
&\P \big[ 0 \text{ is contained in a $*$-path 
of $u$-bad vertices of length at least } N \big]\\
&\le \P\Big[\mbox{There is a $*$-path of $u$-bad vertices
from the origin to $\partial_{\mathrm{int}} B_{\infty}^3(0,cN^\frac1d)$}\Big]\\
& \leq C\sum_{k=N/3}^{\infty}e^{-k^{1/(Cd)}}
 \leq C_2e^{-N^{C_3}}, \ C_2,C_3 > 0,
\end{aligned}
\end{equation*}
which proves the claim.
\end{proof}


\section{Proof of Proposition \ref{prop:transofG} (transience of $\mathcal G^u_\infty$)}
\label{S5}

In this section we take advantage of the relations between simple random walk and
electrical network theory in order to deduce 
that $\mathcal{G}_{\infty}^{u}$ is transient for $u$ as in 
\eqref{prop:badComponentsSmall} and $d$ large enough (see Proposition \ref{prop:transofG}).
\subsection{Rerouting paths around bad vertices} \label{sec:rerouting}
\label{S5.1}
The following is inspired by methods of \cite{AnBeBePe-06}.
Assume that the almost sure event of Proposition \ref{prop:infiniteGoodness}
occurs. Since $\Z^3$ is transient, Lemma \ref{lem:transienceCharact} supplies us with the existence of a probability
measure $\mu$ on infinite simple nearest neighbor paths in $\Z^3$ starting in some $y \in \Z^3,$ and
fulfilling 
\eqref{eq:finiteExpVerticesIII}. 
The idea now is to map infinite simple nearest neighbor paths $\pi$ on $\Z^3$
via a function $\widehat \varphi$
to infinite simple nearest neighbor paths $\widehat{\varphi}(\pi)$ on $\mathcal{G}_{\infty}^{u} \subset \Z^3$
in such a way that $\mu \circ \widehat \varphi^{-1}$ still satisfies condition
\eqref{eq:finiteExpVerticesIII} and hence, again by Lemma \ref{lem:transienceCharact},
this supplies us with the transience of $\mathcal G_\infty^u.$
This mapping will be constructed by cutting out pieces of a path $\pi$ on $\Z^3$ which are not in 
$\mathcal{G}_{\infty}^{u}$ and afterwards replacing them by finite simple nearest neighbor paths
of vertices on $\partial_{{\rm int}}\mathcal{G}_{\infty}^{u}$.
These sequences are chosen in such a way that they connect all parts 
of the path which are inside $\mathcal{G}_{\infty}^{u}$.
In order to ensure that $\P$-a.s., the measure $\mu \circ \widehat \varphi^{-1}$ still satisfies condition
\eqref{eq:finiteExpVerticesIII}, we will have to ensure that 
$*$-connected components of $u$-bad vertices are not too large. 
This is the content of the following lemma.

\begin{lemma}
\label{lem:finitecomponent}
Let $u$ and $d$ 
be as in Proposition \eqref{prop:badComponentsSmall}. Then there is $C_4 >0$ such that $\P$-a.s. one finds $N_0\in\N$ such that
for all $N\geq N_0$ the event
\begin{equation*}
\begin{aligned}
\Big\{\mbox{there is } x\in B_{\infty}^{3}(0,N) &\text{ such that } 
 x \mbox{ is contained in a simple $*$-path}\\
& \mbox{of $u$-bad vertices of length at least $(\log N)^{C_4}$}\Big\}
\end{aligned}
\end{equation*}
does not occur.
\end{lemma}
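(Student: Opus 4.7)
The plan is to apply a union bound together with Proposition \ref{prop:badComponentsSmall} and the Borel--Cantelli lemma. Concretely, choose $C_4 > 1/C_3$, where $C_3$ is the constant provided by Proposition \ref{prop:badComponentsSmall}, and for each integer $N \ge 2$ define the event
\begin{equation*}
E_N := \Big\{ \exists\, x \in B_\infty^3(0,N) \text{ contained in a simple $*$-path of $u$-bad vertices of length} \ge (\log N)^{C_4} \Big\}.
\end{equation*}
Then the claim becomes: $\P$-a.s., only finitely many of the $E_N$ occur.

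First, I would apply a union bound over the at most $(2N+1)^3$ vertices of $B_\infty^3(0,N)$ and invoke Proposition \ref{prop:badComponentsSmall} with $N$ replaced by $(\log N)^{C_4}$, obtaining
\begin{equation*}
\P[E_N] \le (2N+1)^3 \cdot C_2 \exp\!\big(-(\log N)^{C_3 C_4}\big).
\end{equation*}
Since $C_3 C_4 > 1$, for all $N$ large enough we have $(\log N)^{C_3 C_4} \ge 5 \log N$, whence $\P[E_N] \le C N^{-2}$. This is summable in $N$.

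Next, I would apply the first Borel--Cantelli lemma to conclude that $\P$-a.s.\ only finitely many $E_N$ occur. Therefore there exists a (random) $N_0 \in \N$ such that $E_N$ fails for every $N \ge N_0$, which is exactly the assertion of the lemma.

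The proof is straightforward once Proposition \ref{prop:badComponentsSmall} is in hand; there is no real obstacle. The only minor point to verify is the quantitative choice $C_4 > 1/C_3$, which ensures that the stretched-exponential decay provided by Proposition \ref{prop:badComponentsSmall} beats the polynomial volume factor $(2N+1)^3$ after the substitution $N \mapsto (\log N)^{C_4}$.
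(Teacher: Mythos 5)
Your proof is correct and follows exactly the route the paper takes: the paper's own proof of this lemma is just the one-line observation that it follows from Proposition \ref{prop:badComponentsSmall} and the Borel--Cantelli lemma, and your write-up simply fills in the union bound over $B_\infty^3(0,N)$ and the choice $C_4>1/C_3$ that makes the bound summable. Nothing further is needed.
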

\begin{proof}
This follows from Proposition \ref{prop:badComponentsSmall} and
an application of the Borel-Cantelli Lemma.
\end{proof}
In the rest of this section we describe the mapping $\widehat \varphi$ that will send
infinite simple nearest neighbor paths on $\Z^3$ to
infinite simple nearest neighbor paths $\widehat{\varphi}(\pi)$ on $\mathcal G_\infty^u$
as alluded to above. 
Let $\pi$ be an infinite simple nearest neighbor path on $\Z^3$.
We use the following
notation for the sequence of successive returns to and departures from $\mathcal G_{\infty}^{u}$:
\begin{equation*}
\begin{aligned}
D_0&= \min \big\{ k \geq 0\colon\,  \pi(k) \in \Z^3\setminus\mathcal G_\infty^u\big\},
\ R_0= \min \big\{k>D_0\colon\, \pi(k) \in \mathcal G_\infty^u\big\},\\
D_n&= \min \big\{k>R_{n-1}\colon\, \pi(k) \in \Z^3\setminus\mathcal G_\infty^u\big\}, 
\ R_n= \min \big\{k>D_n\colon\, \pi(k) \in \mathcal G_\infty^u\big\},
 \mbox{ for }n\in\N.
\end{aligned}
\end{equation*}
We modify the path $\pi$  on $\Z^3$ in the following way:
\begin{enumerate}

\item if $D_0 = 0,$ we erase the segment $(\pi(0), \ldots, \pi(R_0-1));$

\item
for each $n$ with  $0 < D_n < \infty$ we replace the segment $(\pi(D_{n}), \dots, \pi(R_{n}-1))$
by a finite shortest simple nearest neighbor path on $\mathcal{G}_{\infty}^{u}$ which 
connects $\pi(D_n-1)$ to $\pi(R_{n})$.

\end{enumerate}

Finally, let $\widehat{\varphi}(\pi)$ be the loop-erasure of the path obtained this way,
which is an infinite simple nearest neighbor path on $\mathcal{G}_{\infty}^{u}$.
Below we will use the notation
\begin{align*}
\mathcal B_{x,u}= \left\{
 \begin{array}{ll}
\text{the $*$-connected  component of $x \in \Z^3 \backslash \mathcal G^u_\infty$ of $u$-bad vertices,} &
\text{ if $x$ is $u$-bad}, \\
\emptyset, & \text{ if $x$ is $u$-good.}
\end{array}
\right.
\end{align*}

\begin{remark}
\label{rem:construction}
Step $(b)$ in the above construction is $\P$-a.s. well-defined. In fact, if $D_n < \infty,$ then
by Lemma \ref{lem:finitecomponent},
 $\mathcal{B}_{\pi(D_n),u}$ 
is of finite cardinality, and $\pi$ has to hit $\partial_{\mathrm{ext}}^{*} \mathcal{B}_{\pi(D_n),u}$ in finite time.
 By definition, $\partial_{\mathrm{ext}}^{*} \mathcal{B}_{\pi(D_n),u}$
 consists of $u$-good vertices only; in addition,
 due to \cite[Theorem 4]{T13}, it
is connected, and since it contains $\pi(D_n-1) \in \mathcal G_\infty^u,$
we get $\partial_{\mathrm{ext}}^{*} \mathcal{B}_{\pi(D_n),u}
\subset \mathcal{G}_{\infty}^{u}$. As a consequence,  $R_n,$ $n\geq 1,$ coincides with the first hitting time of 
$
\partial_{\mathrm{ext}}^{*} \mathcal{B}_{\pi(D_n),u}
$
after time $D_n$ and is finite. If $D_0>0$, then the same arguments show that $R_0$ is finite. To see that this is also true in the case that
$D_0=0$ note that one may connect $\pi(D_0)$ by a finite
nearest neighbor path to $\mathcal{G}_{\infty}^{u}$. This allows to apply the previous arguments to deduce the finiteness of $R_0$ also in this case.
In particular, a finite shortest simple nearest neighbor path as postulated in $(b)$
exists.
\end{remark}


\subsection{Rerouting paths preserves finite energy}
\label{S5.2}
In this section we show that $\widehat{\varphi}(\pi)$ induces a probability measure as in
 condition $(b)$ of Lemma
\ref{lem:transienceCharact}.
In fact, since
$\Z^3$ is transient,
Lemma \ref{lem:transienceCharact} implies that
there is $z\in\Z^3$ and a probability measure $\mu$ on  $\Pi_z(\Z^3)$ which satisfies the finite energy condition 
\eqref{eq:finiteExpVerticesIII}, i.e.,
 we have 
\begin{equation} \label{eq:finiteEnergy}
\sum_{x \in \Z^3} \mu^2[\pi \in \Pi_z(\Z^3) \, : \, x \in \pi] < \infty.
\end{equation}

By Lemma \ref{lem:transienceCharact}, in order to prove that $\mathcal G^u_\infty$ is transient
a.s.,
we only need to show that $\mu \circ \widehat{\varphi}^{-1}$ 
satisfies \eqref{eq:finiteExpVerticesIII}, i.e., 
we have 
\begin{equation} \label{eq:finitenessToShow}
\sum_{x \in \Z^3} \mu^2[x\in \widehat{\varphi}(\pi)] < \infty, \quad \P-\text{a.s.}\footnote{In fact, note that since 
by Lemma \ref{lem:finitecomponent} we have $\vert \mathcal B_{z,u} \vert < \infty$ a.s., there exists $z' \in \mathcal G_\infty^u$
such that $\mu \circ \widehat \varphi^{-1}$ puts positive mass on 
$\Pi_{z'}(\mathcal G_\infty^u).$ Restricting $\mu$ to this latter set and normalizing it puts us into the exact context of
Lemma \ref{lem:transienceCharact}.}
\end{equation}
We set for $x, y \in \Z^3$
\begin{align*}
S(x)= \left\{
 \begin{array}{ll}
\partial_{\mathrm{int}}^* \mathcal \Z^3 \backslash \mathcal B_{x,u}, & 
\text{ if } x \in \Z^3 \backslash \mathcal G^u_\infty, \\
\{x\}, &
 \text{ if } x \in \mathcal G^u_\infty,
\end{array}
\right.
\end{align*}
and
\begin{equation*} 
 T(y)=\{ x\,\colon\,  y \in S(x) \}.
\end{equation*}

Using the definition of $\widehat \varphi,$ we obtain the first inequality in
\begin{equation*}
\mu^2[x \in \widehat{\varphi}(\pi)] \leq \left(\sum_{ y \in T(x) } \mu[y \in \pi] \right)^2
\leq
|T(x)| \sum_{ y \in T(x) } \mu^2[y \in \pi],
\end{equation*}
and the second inequality in this chain is due
to the Cauchy-Schwarz inequality.
Hence, 
\begin{equation*}
 \sum_{x \in \Z^3} \mu^2[x \in \widehat{\varphi}(\pi)] 
\leq \sum_{x \in \Z^3} |T(x)| \sum_{ y \in T(x) } \mu^2[y \in \pi]
= \sum_{y \in \Z^3} \mu^2[y \in \pi]  \sum_{ x \in S(y) }  |T(x)| .
\end{equation*}
Therefore, in order to establish
\eqref{eq:finitenessToShow},  by 
(\ref{eq:finiteEnergy}) it suffices 
to show that 
\begin{equation}\label{expect_S_T_finite}
 \sup_{ x \in \Z^3} \E \left[ \sum_{ y \in S(x) }  |T(y)| \right] <\infty.
\end{equation}

\begin{lemma}
\label{lem:finiteenergy}
The term in (\ref{expect_S_T_finite}) is finite.
\end{lemma}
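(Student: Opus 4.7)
The plan is to bound $\sum_{y \in S(x)} |T(y)|$ pointwise (in $\omega$) by a polynomial in a locally defined random variable, and then to exploit Proposition~\ref{prop:badComponentsSmall} to show that the moments of this variable are finite uniformly in $x \in \Z^3$ by shift-invariance.

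First I would unpack the definitions of $S$ and $T$. Note that for any $y \in \Z^3$ one has $y \in T(y)$ only if $y \in \mathcal G_\infty^u$, since $S(z) = \{z\}$ for $z \in \mathcal G_\infty^u$ while $S(z) \subseteq \partial_{\mathrm{ext}}^* \mathcal B_{z,u} \subset \mathcal G_\infty^u$ for $z$ $u$-bad. For $y \in \mathcal G_\infty^u$, a vertex $x$ belongs to $T(y) \setminus \{y\}$ only if $x$ is $u$-bad and $y$ is $*$-adjacent to $\mathcal B_{x,u}$, i.e.\ only if $x \in \mathcal B_{z,u}$ for some $u$-bad $z$ with $|z-y|_\infty \le 1$. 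Since every vertex in $\Z^3$ has $26$ $*$-neighbors, this gives
\[
|T(y)| \le 1 + 26 \, \max\{|\mathcal B_{z,u}| : z \in \Z^3,\ |z-y|_\infty \le 1\},
\]
with the convention $|\mathcal B_{z,u}| = 0$ for $z$ $u$-good. Similarly $|S(x)| \le 1$ for $x \in \mathcal G_\infty^u$ while $|S(x)| \le |\partial_{\mathrm{ext}}^*\mathcal B_{x,u}| \le 26|\mathcal B_{x,u}|$ for $x$ $u$-bad. Introducing
\[
\mathcal M(x) := \max\{|\mathcal B_{z,u}| : z \in \Z^3,\ |z-x|_\infty \le |\mathcal B_{x,u}| + 2\},
\]
which satisfies $\mathcal M(x) \ge |\mathcal B_{x,u}|$ (take $z=x$), and observing that every $y \in S(x)$ satisfies $|y-x|_\infty \le |\mathcal B_{x,u}|+1$, I obtain the deterministic bound
\[
\sum_{y \in S(x)} |T(y)| \le C(1 + \mathcal M(x))^2
\]
for some $C < \infty$.

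Second, I would convert the tail estimate on simple $*$-paths of $u$-bad vertices from Proposition~\ref{prop:badComponentsSmall} into a tail on $|\mathcal B_{x,u}|$. Any $*$-connected subset of $\Z^3$ of cardinality $N$ has $\ell^\infty$-diameter at least $c N^{1/3}$, since a ball $B_\infty^3(0,r)$ contains at most $(2r+1)^3$ points; hence $\mathcal B_{x,u}$ contains some vertex $a$ with $|a-x|_\infty \ge c N^{1/3}/2$, and the loop-erasure of any $*$-path in $\mathcal B_{x,u}$ connecting $x$ to $a$ is a simple $*$-path through $x$ of $u$-bad vertices of length at least $cN^{1/3}/2$. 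Proposition~\ref{prop:badComponentsSmall} then yields
\[
\sup_{x \in \Z^3} \P[|\mathcal B_{x,u}| \ge N] \le C_2 \exp\!\big(-c'' N^{C_3/3}\big).
\]
A union bound over $B_\infty^3(x, N^{1/2}+2)$, combined with the previous estimate, gives, for large $N$,
\[
\P[\mathcal M(x) \ge N] \le \P[|\mathcal B_{x,u}| \ge N^{1/2}] + C N^{3/2} \sup_z \P[|\mathcal B_{z,u}| \ge N],
\]
which decays stretched-exponentially in $N$. In particular $\E[\mathcal M(x)^2] < \infty$ uniformly in $x$ by shift-invariance of $\P$, and plugging into the deterministic bound from the first step yields \eqref{expect_S_T_finite}.

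The hardest part is the second step: translating the path-length tail of Proposition~\ref{prop:badComponentsSmall} into a component-size tail via the geometric observation that a $*$-connected subset of $\Z^3$ of cardinality $N$ contains a simple $*$-path through any prescribed vertex of length $\gtrsim N^{1/3}$. The remaining steps are a straightforward union bound and the elementary fact that a random variable with stretched-exponential tail has finite moments of every order.
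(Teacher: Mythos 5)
Your proof is correct, but it follows a genuinely different route from the paper's. The paper reduces to $x=0$ by shift-invariance and organizes the expectation as a sum over $z\in\Z^3$ of $\P[S(0)\cap S(z)\neq\emptyset]$, noting that a vertex $z$ contributes to $\sum_{y\in S(0)}|T(y)|$ precisely when $S(0)\cap S(z)\neq\emptyset$; on that event both $\mathcal B_{0,u}$ and $\mathcal B_{z,u}$ come within $\ell^\infty$-distance $1$ of the common point $y$, so by $*$-connectedness of the bad clusters either $0$ or $z$ lies on a simple $*$-path of $u$-bad vertices of length at least $\floor{|z|_\infty-1}/2$, and Proposition~\ref{prop:badComponentsSmall} gives $\P[S(0)\cap S(z)\neq\emptyset]\le C_5e^{-|z|_\infty^{C_6}}$, which is summable in $z$. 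You instead dominate $\sum_{y\in S(x)}|T(y)|$ pointwise in $\omega$ by $C(1+\mathcal M(x))^2$ for a local maximum $\mathcal M(x)$ of bad-cluster sizes, convert the path-length tail of Proposition~\ref{prop:badComponentsSmall} into a cluster-size tail via the cardinality-versus-diameter bound in $\Z^3$, and conclude with a union bound over a controlled radius and a second-moment computation. Both arguments ultimately rest on Proposition~\ref{prop:badComponentsSmall}; yours loses a power in the exponent (stretched exponent $C_3/3$ instead of $C_3$, immaterial here) through the $N^{1/3}$ diameter bound, but in exchange it is deterministic at the level of the random variable, yields finiteness of all moments of $\sum_{y\in S(x)}|T(y)|$, and avoids the counting step around \eqref{expectT}, whose left-hand side is really $\sum_{z}\E\big[|S(0)\cap S(z)|\big]$ and requires an extra (easy) bound to reduce to indicator probabilities — your pointwise bound sidesteps this entirely. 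Two cosmetic remarks: for $u$-bad $z$ one has $S(z)=\partial^{*}\mathcal B_{z,u}\supseteq\partial_{\mathrm{ext}}^{*}\mathcal B_{z,u}$ rather than the inclusion you state, and these boundary vertices are $u$-good but need not belong to $\mathcal G_\infty^u$; neither point affects your estimates, which only use that elements of $S(z)$ are $*$-adjacent to $\mathcal B_{z,u}$ and that $|S(z)|\le 26|\mathcal B_{z,u}|$.
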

\begin{proof}
By shift invariance of $\P$ it suffices to prove the claim for $x=0$.
Note that
\begin{equation*}
\begin{aligned}
z\in \bigcup_{y\in S(0)} T(y) 
\Longleftrightarrow S(0)\cap S(z)\neq \emptyset,
\end{aligned}
\end{equation*}
which yields 
\begin{equation}
\label{expectT}
\E\left[\sum_{y\in S(0)} |T(y)|\right] 
= \P[S(0)\neq \emptyset] + \sum_{z\neq 0} \P[S(0)\cap S(z) \neq \emptyset].
\end{equation}
To estimate the second term on the right-hand side of (\ref{expectT})
note that if $S(0)\cap S(z)\neq \emptyset$, then for $y \in S(0)\cap S(z)$, 
\begin{equation}
\label{eq:S0Sz}
\begin{aligned}
&(1)\,  \mbox{ there is } x_0\in\mathcal{B}_{0,u}
\mbox{ such that } \vert y-x_0\vert_{\infty}=1;\\
&(2)\,  \mbox{ and there is } x_1\in\mathcal{B}_{z,u}
\mbox{ such that } \vert y-x_1\vert_{\infty}=1.
\end{aligned}
\end{equation}
Since $\mathcal{B}_{0,u}$ and $\mathcal{B}_{z,u}$ are $*$-connected, there is a $*$-path
of $u$-bad vertices starting in $0$ and ending in $x_0$, and a $*$-path of $u$-bad vertices starting in $x_1$ and ending in $z$.
Since $\vert x_0-x_1 \vert_\infty \le 2,$ we infer that
at least one of these two paths must have length at least
$\floor{\vert z\vert_{\infty}-1}/2$, and hence either $0$ is contained in a $*$-path of $u$-bad vertices
of length at least $\floor{\vert z\vert_{\infty}-1}/2$, or this property holds for $z$.
Proposition \ref{prop:badComponentsSmall} and the shift invariance of $\P$ yield
\begin{equation}
\label{estS}
\begin{aligned}
&\P[S(0)\cap S(z) \neq \emptyset]\\
&\qquad \leq 2
\P\big[0 \mbox{ is contained in a simple $*$-path of $u$-bad vertices of length at least $\floor{\vert z\vert_{\infty}-1}/2$}\big]\\
&\qquad \leq C_5e^{-|z|_{\infty}^{C_6}}, \qquad C_5, C_6>0.
\end{aligned}
\end{equation}
\end{proof}



\appendix
\section{Proof of Proposition \ref{prop:decoupl}}
\label{A}
In this appendix we prove Proposition \ref{prop:decoupl}.
The proof is essentially the same as the proof of Theorems 2.1 and 3.4 in \cite{SZ12}.
While the proof of the latter one goes through in exactly the same way, 
we restrict ourselves to giving the main modifications of the proof of
Theorem 2.1 in \cite{SZ12}.
Note that the setting in \cite{SZ12} differs sligthly from the setting
of the current work. Indeed, in \cite{SZ12} more general graphs are considered
and the norm in \cite{SZ12} is different from the Euclidean norm we are considering here. Nevertheless, as stated in the first paragraph in \cite{SZ12}
up to a change of constants the results of \cite{SZ12}
stay true when working in the setting of this article.

\medskip\noindent
$\bullet$ {\bf Notation in \cite{SZ12}.}
Let $l_0>1$ be a constant to be chosen later on, $L_0\geq 1$, 
and define  the geometric scales $L_n=l_0^nL_0$, $n\in\N_0$ .
For $n\in\N_0$, we denote the dyadic tree of depth $n$ by 
$T_n=\bigcup_{0\leq k \leq n} \{1,2\}^k$ and the set of vertices of the tree
at depth $k$ by $T_{(k)} = \{1,2\}^k$.
Given a mapping $\T\colon T_n\rightarrow \Z^d$, we define
\begin{equation}
\label{tau}
x_{m,\T} = \T(m), 
~\widetilde{C}_{m,\T} = B_2(x_{m,\T}, 10\sqrt{d} L_{n-k}), 
\mbox{ for } m\in T_{(k)},~ 0\leq k \leq n.
\end{equation}
For any $0\leq k < n$, $m\in T_{(k)}$, we say that $m_1, m_2$
are the two descendants of $m$ in $T_{(k+1)}$, if they are obtained
by concatenating $1$ and $2$ to $m$, respectively.
We say that $\T$ is a permitted embedding if for any $0\leq k<n$
and $m\in T_{(k)}$,
\begin{equation}
\label{admiss}
\widetilde{C}_{m_1,\T} \cup \widetilde{C}_{m_2,\T} \subseteq \widetilde{C}_{m,\T},
\qquad |x_{m_1,\T}-x_{m_2,\T}|_2\geq  \frac{\sqrt{d}}{100}L_{n-k}.
\end{equation}
The set of all permitted embeddings is denoted by $\Lambda_n$.
Given $n\in\N_0$ and $\T\in \Lambda_n,$ we say that a family  $A_m$, $m\in T_{(n)},$ of events
 of measurable subsets
of $\{0,1\}^{\Z^d}$
is  $\T$-adapted if
\begin{equation*}
A_m \mbox{ is } \sigma\big(\Psi_x, x\in \widetilde{C}_{m,\T}\big)-
\mbox{measurable for each } m\in T_{(n)}.
\end{equation*}
For $n\in\N_0$ and $\T\in \Lambda_{n+1}$, we denote by $\T_i$, $i\in\{1,2\}$,
the embeddings of $T_n$ such that $\T_i(m) = \T((i,i_1,\ldots,i_k))$,
for $m=(i_1,i_2,\ldots, i_k)$ in $T_{(k)}$.
Given a $\T$-adapted collection $A_m$, $m\in T_{(n+1)}$,
we define $\T_i$-adapted collections, $A_{m,i}$, $i\in\{1,2\}$, via
\begin{equation*}
A_{m,i} = A_{(i,i_1,\ldots,i_n)}, 
\mbox{ for } m=(i_1,i_2,\ldots,i_n)\in T_{(n)} .
\end{equation*}

\medskip\noindent
$\bullet$ {\bf The Proof.}
Recall (\ref{eq:fdecoupl}--\ref{eq:epsilondec}) and the convention we made about constants
in the introduction. We now adapt Theorem 2.1 in \cite{SZ12} to our setting.

\begin{theorem}
\label{thm:indstep}
There are $c_0,c_1 >1$,
such that for  $l_0 \geq 10^6\sqrt{d}c_0$ and $L_0\geq \sqrt{d}$,
for all $n\in\N_0$, $\T\in \Lambda_{n+1}$,
for all $\T$-adapted collections $A_m$, $m\in T_{(n+1)},$ 
of increasing events on $\{0,1\}^{\Z^d}$, and for all
$u > u'> 0$ such that
\begin{equation*}
u\geq \Bigg(1+32e^2c_1^d\frac{1}{(n+1)^{3/2}}l_0^{-(d-3)/2}\Bigg)u',
\end{equation*}
one has
\begin{equation*}
\P\left[\bigcap_{m\in T_{(n+1)}} A_{m}^{u'}\right] 
\leq \P\left[\bigcap_{\overline{m}_1\in T_{(n)}} A_{\overline{m}_1,1}^{u}\right]
 \P\left[\bigcap_{\overline{m}_2\in T_{(n)}} A_{\overline{m}_2,2}^{u}\right]
 +2e^{-2u'\frac{1}{(n+1)^3}L_n^{d-2}l_0}.
\end{equation*}
\end{theorem}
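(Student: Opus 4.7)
The plan is to split the tree at its root and run a sprinkling-decoupling argument in the spirit of Theorem~2.1 in \cite{SZ12}, keeping explicit track of the dependence of all constants on the dimension $d$.

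\textbf{Setup and Poisson thinning.} Set $K_i:=\widetilde{C}_{(i),\T}$ for $i\in\{1,2\}$. By the admissibility condition \eqref{admiss} and the assumption $l_0\ge 10^6\sqrt d c_0$, the sets $K_1,K_2$ are two disjoint Euclidean balls of radius $10\sqrt d L_n$ at mutual distance at least $\sqrt d l_0L_n/200$, and each collection $\{A_{\bar m,i}:\bar m\in T_{(n)}\}$ is $\sigma(\Psi_x,x\in K_i)$-measurable. Classify each trajectory of the interlacement PPP at level $u'$ according to which of $K_1,K_2$ it visits; the four resulting thinnings are independent, each side-$i$ event depends only on trajectories hitting $K_i$, and the only obstruction to independence of the two sides is the ``cross'' class of trajectories visiting both, whose count $N_{12}$ is Poisson with parameter
\[
u'\lambda,\qquad \lambda:=\nu[W_{K_1}^*\cap W_{K_2}^*].
\]

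\textbf{Quantitative control of $\lambda$ (the main obstacle).} The delicate step is the inequality
\[
\lambda\;\le\; c_1^d\,L_n^{d-2}\,l_0^{-(d-3)/2}
\]
with a dimension-independent $c_1>1$. I would combine the sweeping estimate $\lambda\le 2\C(K_1)\sup_{x\in K_1}P_x[H_{K_2}<\infty]$ with the last-exit decomposition $P_x[H_{K_2}<\infty]\le \C(K_2)\sup_{y\in K_1,z\in K_2}g(y,z)$, the classical Green function bound $g(y,z)\le c|y-z|_2^{-(d-2)}$, and \eqref{eq:capBd} applied to each ball $K_i$ of radius $10\sqrt d L_n$. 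Inserting the separation $\mathrm{dist}(K_1,K_2)\ge \sqrt d l_0L_n/200$ and collecting the resulting powers of $L_n$, $\sqrt d$ and $l_0$ gives a bound of order $(c/\sqrt d\,l_0)^{d-2}L_n^{d-2}$, which for $l_0\ge 10^6\sqrt d c_0$ is strictly stronger than $c_1^d L_n^{d-2}l_0^{-(d-3)/2}$. The tricky point is precisely the bookkeeping: the exponent $(d-3)/2$ rather than the naive $(d-2)$ is dictated by the need for the infinite product \eqref{eq:fdecoupl} to converge, and the factor $c_1^d$ has to absorb every dimension-dependent prefactor uniformly in $n$ and $L_0$.

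\textbf{Sprinkling coupling.} Write $\mathcal I^u=\mathcal I^{u'}\sqcup\hat{\mathcal I}$ with $\hat{\mathcal I}$ an independent PPP of level $u-u'$, and fix a threshold $M_n$ to be tuned below. Conditionally on $N_{12}$, the $K_i$-trace of each cross trajectory is dominated by a random walk started from $\e_{K_i}/\C(K_i)$; hence, on $\{N_{12}\le M_n\}$, one constructs two \emph{independent} auxiliary point processes (one per side), each of mean intensity $M_n$, which stochastically dominate the respective cross contribution and can be absorbed into $\hat{\mathcal I}\cap W_{K_i}^*$ as soon as the budget $(u-u')\C(K_i)$ exceeds $M_n$. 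After this absorption the two sides are independent and the resulting $K_i$-restricted process is distributed as the $K_i$-hit part of a genuine interlacement at level $u$. Since the $A_m$ are increasing, this produces
\[
\P\Bigl[\bigcap_m A_m^{u'},\;N_{12}\le M_n\Bigr]\;\le\;\P\Bigl[\bigcap_{\bar m_1}A_{\bar m_1,1}^u\Bigr]\P\Bigl[\bigcap_{\bar m_2}A_{\bar m_2,2}^u\Bigr].
\]

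\textbf{Chernoff tail and conclusion.} Finally, $M_n$ is chosen so that the sprinkling budget $(u-u')\inf_i\C(K_i)$, which by the hypothesis on $u$ and by \eqref{eq:capBd} is at least $u'\cdot 32 e^2 c_1^d(n+1)^{-3/2}l_0^{-(d-3)/2}\cdot(10c_2L_n)^{d-2}$, exceeds $2u'(n+1)^{-3}L_n^{d-2}l_0$. The large gap between this $M_n$ and the mean $u'\lambda$ controlled in the second paragraph makes the Poisson Chernoff estimate yield
\[
\P[N_{12}>M_n]\;\le\; 2\,e^{-2u'(n+1)^{-3}L_n^{d-2}l_0},
\]
which is exactly the error term in the statement. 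Summing the two contributions concludes the argument.
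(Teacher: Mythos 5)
There is a genuine gap: your scheme cannot produce the error term $2e^{-2u'(n+1)^{-3}L_n^{d-2}l_0}$, because you only control the trajectories hitting \emph{both} small boxes $K_i=\widetilde C_{(i),\T}$ (radius $10\sqrt d L_n$), and that count is far too small for the arithmetic to close. Concretely, your sprinkling budget is $(u-u')\C(K_i)\approx u'\,32e^2c_1^d(n+1)^{-3/2}l_0^{-(d-3)/2}(10cL_n)^{d-2}$, while the mean number of cross trajectories is $u'\lambda\approx u'c^d(L_n/l_0)^{d-2}$. Any threshold $M_n$ must sit below the budget (else the absorption step fails), so the best Poisson upper-tail exponent you can extract for $\P[N_{12}>M_n]$ is of order $M_n\log\bigl(M_n/(e\,u'\lambda)\bigr)\lesssim u'c^d(n+1)^{-3/2}l_0^{-(d-3)/2}L_n^{d-2}\,d\log l_0$, which is smaller than the required exponent $2u'(n+1)^{-3}L_n^{d-2}l_0$ by a factor of order $l_0^{(d-1)/2}/\bigl(c^d d\log l_0\,(n+1)^{3/2}\bigr)$ --- astronomically large for, say, $n=0$, $l_0\geq 10^6\sqrt d c_0$ and $d$ large. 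Your intermediate claim that the budget ``exceeds $2u'(n+1)^{-3}L_n^{d-2}l_0$'' is false for small $n$: the ratio is $c^d(n+1)^{3/2}l_0^{-(d-1)/2}\ll 1$. Worse, for $n+1\gtrsim c^d l_0^{(d-1)/3}$ the budget even drops below the mean $u'\lambda$, so no choice of $M_n$ works at all, while the right-hand side of the theorem is still tiny (it decays geometrically in $n$). A second, independent problem is your assertion that the $K_i$-trace of a cross trajectory ``is dominated by a random walk started from $\e_{K_i}/\C(K_i)$'': the entrance law of such a biased trajectory is only comparable to the normalized equilibrium measure up to a multiplicative constant $c^d$, and obtaining even that requires a Harnack-chain argument --- precisely the ingredient (Proposition \ref{prop:harnack} and Lemma \ref{lem:equilbrest}) that carries the dimension dependence in the paper, and which your plan never invokes.

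The paper's proof (following Theorem 2.1 of \cite{SZ12}) works at a different scale and with a different counting variable, and this is what makes the strong error term possible: one surrounds $\T(1),\T(2)$ by balls $U_i$ of radius $\sqrt d L_{n+1}/1000$ and $\widetilde B_i$ of radius $\sqrt d L_{n+1}/(2000M)$, decomposes the trajectories of the level-$u'$ process into excursions between $\widetilde B_1\cup\widetilde B_2$ and $\partial U$, and uses Lemma \ref{lem:equilbrest} (via the Harnack inequality and \eqref{eq:capBd}) both to make each return excursion succeed with probability at most $(2e)^{-1}$ and to compare its entrance law with $\e_{\widetilde B_1\cup\widetilde B_2}$ up to $c^d$. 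The number of excursions has mean of order $u'\C(\widetilde B_1\cup\widetilde B_2)\approx u'c^dL_{n+1}^{d-2}$, and the bad event is a deviation of relative size comparable to the sprinkling fraction $\approx c^d(n+1)^{-3/2}l_0^{-(d-3)/2}$; the resulting sub-Gaussian exponent is (sprinkling fraction)$^2\times\C(\widetilde B_i)\approx u'(n+1)^{-3}L_n^{d-2}l_0$, exactly the stated error term. Your ``number of trajectories visiting both boxes'' has mean $\propto(L_n/l_0)^{d-2}$, orders of magnitude smaller, so no Chernoff bound on it can reach that exponent; to repair the argument you would have to redo the excursion decomposition at scale $L_{n+1}$ and supply the Harnack-based entrance-law comparison with explicit $d$-dependence, which is the actual content of the appendix.
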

%

\begin{proof}
The proof is analogous to that of Theorem 2.1 in \cite{SZ12}.
Thus, we only point out the modifications which are necessary to 
adapt the proof of \cite{SZ12} to our setting.\\
First replace Lemma 1.2 in \cite{SZ12}, which is used
in equation (\rm{2.31}) in \cite{SZ12}, by Proposition 1.3
in \cite{SZ11B}, which reads as follows.

\begin{proposition}
\label{prop:harnack}
There exist $c_0, c_4>1$, 
such that if $L\geq d$  and if $h$ is a non-negative function defined on 
$\overline{B_{2}(0,c_0L)}$ and harmonic in $B_{2}(0,c_0L)$, one has
\begin{equation*}
\max_{x\in B_2(0,L)}h(x) \leq c_4^d \min_{x\in B_2(0,L)} h(x).
\end{equation*}
\end{proposition}
 Second, define similarly as in \cite{SZ12}, {\rm (2.13)--(2.14)}, for $i\in\{1,2\}$
and $\T\in \Lambda_{n+1}$

\begin{equation*}
\qquad \ U= U_1\cup U_2 \qquad \mbox{ with } \qquad
 U_i= B_2\bigg(\T(i), \frac{\sqrt{d}L_{n+1}}{1000}\bigg).
\end{equation*}
as well as
\begin{equation}
\label{eq:M}
\qquad \widetilde{B}_i= B_2\bigg(\T(i), \frac{\sqrt{d}L_{n+1}}{2000M}\bigg)
\end{equation}
for a constant $1\leq M\leq l_0/(2\cdot 10^4)$ to be determined.
Note  in particular that, by (\ref{admiss}), one has $U_1\cap U_2 = \emptyset$.
Moreover, from the definition of the scales $L_n$ we infer $\widetilde C_{i,\T} \in U_i,$ $i \in \{1,2\}$.

The forthcoming lemma replaces Lemma 2.3 in \cite{SZ12} 
and provides bounds on the probability
that a random walk starting in $\partial U \cup \partial_{\mathrm{int}} U$
enters a strict subset $\widetilde{W}$ of $U$ in finite time.
It is applied in equations (\rm{2.33}) and (\rm{2.36}) in \cite{SZ12}.
Before stating the lemma we recall the definition of the entrance time $H_K$
in (\ref{eq:hittime}) and we  moreover define
\begin{equation*}
P_{e_U} = \sum_{x\in U} e_{U}(x)P_x.
\end{equation*}

\begin{lemma}
\label{lem:equilbrest}
Let  $l_0\geq 10^6\sqrt{d}c_0$ and $L_0\geq \sqrt{d}$.
For any $\widetilde{W} \subseteq B_2 \big(\T(1), \sqrt{d}L_{n+1}/2000\big)\cup B_2 \big(\T(2), 
\sqrt{d}L_{n+1}/2000\big)$, $x\in \partial U \cup \partial_{\mathrm{int}} U$,
$x'\in \widetilde{W}$,
one has for some constants  $c_5,c_6>0$,
\begin{equation*}
c_5^{d}L_{n+1}^{-(d-2)} e_{\widetilde{W}}(x')
\leq P_x\big[H_{\widetilde{W}} <\infty, X_{H_{\widetilde{W}}} =x'\big]
\leq c_6^dL_{n+1}^{-(d-2)} e_{\widetilde{W}}(x').
\end{equation*}
\end{lemma}
\begin{proof}
The proof follows the lines of the proof of Lemma 2.3 in \cite{SZ12} with a special attention to the dependence of constants
on the dimension.
First, since $\widetilde{W} \subseteq U$, one has the sweeping 
identity
\begin{equation*}
e_{\widetilde{W}}(x') = P_{e_U}\big[H_{\widetilde{W}} <\infty,
X_{H_{\widetilde{W}}} =x'\big],
\end{equation*}
from which one infers that
\begin{equation}
\label{eq:appsweeping}
\begin{aligned}
& \C(U)\inf_{x\in \partial_{\mathrm{int}} U} 
P_x\big[H_{\widetilde{W}} <\infty, X_{H_{\widetilde{W}}} =x'\big]
\leq e_{\widetilde{W}}(x')\\
&\qquad\leq \C(U)\sup_{x\in \partial_{\mathrm{int}} U} 
P_x\big[H_{\widetilde{W}} <\infty, X_{H_{\widetilde{W}}} =x'\big].
\end{aligned}
\end{equation}
Next, we claim that using (\ref{tau}--\ref{admiss}) 
one can find  $c_7>0$ such that
any two points $x_1,x_2 \in \partial_{\mathrm{int}} U$
may be connected by not  more than $c_7$
overlapping balls  $B_2\big(x',\sqrt{d}L_{n+1}/4000c_0\big)$, $x' \in \partial_{\mathrm{int}}U\cup U^c.$ In fact, 
along the lines of Lemma 2.2 of \cite{SZ11B}, any two points on $\partial _{\mathrm{int}} U_i$ can be connected ``along'' the great circle centered in $\T(i)$ with radius 
$\frac{\sqrt{d}L_{n+1}}{1000},$ by $c_7/3$ such overlapping balls; on the other hand, from \eqref{tau} one can deduce that the
same is true for two points $y_1,y_2$ such that $y_i \in \partial_{{\rm int}} U_i,$ and such that they have minimal
distance among any such pair of points, whence the claim follows.
Since the function $h(x)= P_x\big[H_{\widetilde{W}} <\infty, X_{H_{\widetilde{W}}} =x'\big]$
is non-negative and harmonic  on $B_2\big(x',\sqrt{d}L_{n+1}/4000\big) \subseteq \widetilde{W}^c,$ for all $x'\in \partial_{\mathrm{int}}U\cup U^c,$
and since
$\sqrt{d}L_{n+1}/4000c_0 \ge d$,
we obtain by Proposition \ref{prop:harnack} that
\begin{equation}
\label{applharnack}
\begin{aligned}
\sup_{x\in \partial_{\mathrm{int}}U}  
P_x\big[H_{\widetilde{W}} <\infty, X_{H_{\widetilde{W}}} =x'\big]
&\leq   c_4^{dc_7}
\inf_{x\partial_{\mathrm{int}}U}  
P_x\big[H_{\widetilde{W}} <\infty, X_{H_{\widetilde{W}}} =x'\big]\\
&= c^d \inf_{x\in \partial_{\mathrm{int}}U}  
P_x\big[H_{\widetilde{W}} <\infty, X_{H_{\widetilde{W}}} =x'\big].
\end{aligned}
\end{equation}
Finally, note that by \eqref{eq:capBd} and the subadditivity of capacity (see \eqref{eq:subadd})
we have
\begin{equation}
\label{capU}
\Bigg(\frac{c_2L_{n+1}}{1000}\Bigg)^{d-2}
\leq \C(U)
\leq 2(c_3 L_{n+1})^{d-2}.
\end{equation}
Inserting (\ref{applharnack}) and (\ref{capU}) into (\ref{eq:appsweeping}),
yields the claim for $x\in \partial_{\mathrm{int}}U$.
The extension to $x\in\partial U$ follows from the fact that
$P_x[X_1=y]=1/(2d)$ for all $x,y\in\Z^d$ with $|x-y|_1=1$.
\end{proof}
Since for all $n\in\N_0$ the inequality $\sqrt{d}L_n\geq d$ holds one can apply
\eqref{eq:capBd} to all balls in the Euclidean norm whose radius is larger than
$\sqrt{d}L_n$. 
Using this fact repeatedly, from that moment on, the proof works similarly as the proof of \cite[Theorem 2.1]{SZ12}.
In particular $M$ as introduced in \eqref{eq:M}, which is determined in equation (2.36) in \cite{SZ12}, satisfies
\begin{equation*}
c_6^dL_{n+1}^{-(d-2)} \C(\widetilde{B}_1\cup \widetilde{B}_2)
\leq 2c_6^dL_{n+1}^{-(d-2)}\Big(\frac{c_3L_{n+1}}{2000M}\Big)^{d-2}
\leq (2e)^{-1}.
\end{equation*}
Thus, $M$ does not depend on $d$.
To conclude Proposition \ref{prop:decoupl} from Theorem \ref{thm:indstep}
one proceeds as in the proof of \cite[Theorem 3.4]{SZ12}.
\end{proof}

{\bf Acknowledgment:} We are indebted to B. R\'ath and A. Sapozhnikov for inspiring discussions as well as
for useful comments
on a first draft of this paper.

\bibliographystyle{alpha}

\end{document}